\newcommand{\proj}{\mathbb{P}^{1}}
\newcommand{\Z}{\mathbb{Z}}
\newcommand{\C}{\mathbb{C}}
\newcommand{\spec}[1]{\mathrm{Spec}(#1)}
\newcommand{\spe}[1]{\mathrm{Spec}\, #1}
\newcommand{\tr}[2]{\mathrm{tr}(#1 \colon#2 )}
\newcommand{\per}[2]{\mathrm{Per}_{#1}(#2)}
\newcommand{\dzeta}[2]{Z_{#1}(#2 ; t )}
\newcommand{\lzeta}[2]{L_{#1}(#2 ; t )}
\renewcommand{\epsilon}{\varepsilon}
\newcommand{\sheaf}{\Omega_{X}^{\otimes m}}
\newcommand{\mult}[2]{\lambda_{#2}(#1)}
\newcommand{\fixpt}[1]{\mathrm{Fix}(#1)}
\newcommand{\cohomo}[2]{H^{#1}(\proj_K,\Omega_{\proj_K}^{\otimes #2})}
\newcommand{\dyndet}[1]{\det(1-tD_{#1}^1(\phi))}
\newcommand{\pgl}{\mathrm{PGL}_2(K)}
\newcommand{\dform}[1]{\left(\frac{dz}{z}\right)^{\otimes #1}}
\newcommand{\lamz}{\lambda_{0}}
\newcommand{\lami}{\lambda_{\infty}}
\newcommand{\Fz}{F_{0}}
\newcommand{\Fi}{F_{\infty}}
\theoremstyle{plain}
\newtheorem{thm}{Theorem}[section]
\newtheorem{prop}[thm]{Proposition}
\newtheorem{lem}[thm]{Lemma}
\newtheorem{cor}[thm]{Corollary}
\theoremstyle{definition}
\newtheorem{defin}[thm]{Definition}
\newtheorem{ex}[thm]{Example}
\newtheorem{rem}[thm]{Remark}
\title{The rationality of dynamical zeta functions and Woods Hole fixed point formula}
\author{Kohei Takehira}
\address{Mathematical Institute, Graduate School of Science, Tohoku University, 6-3 Aramakiaza, Aoba, Sendai, Miyagi 980-8578, Japan.}
\email{kohei.takehira.p5@dc.tohoku.ac.jp}
\begin{document}
\maketitle 

\begin{abstract}
For one variable rational function $\phi\in K(z)$ over a field $K$, we can define a discrete dynamical system by regarding $\phi$ as a self morphism of $\proj_K$. Hatjispyros and Vivaldi defined a dynamical zeta function for this dynamical system using multipliers of periodic points, that is, an invariant which indicates the local behavior of dynamical systems. In this paper, we prove the rationality of dynamical zeta functions of this type for a large class of rational functions $\phi\in K(z)$. The proof here relies on Woods Hole fixed point formula and some basic facts on the trace of a linear map acting on cohomology of a coherent sheaf on $\proj_{K}$.
\end{abstract}

\tableofcontents

%
%
%
\section{Introduction}

In this paper,  we study the rationality of the dynamical zeta function introduced by Hatjispyros and Vivaldi in \cite{Hatjispyros-Vivaldi}. In this section, we introduce the dynamical zeta function $\dzeta{m}{\phi}$ without explaining details and state our main result.

Let $K$ be an algebraically closed field with characteristic 0. For a rational function $\phi\in K(z)$ and a non-negative integer $m\in\Z_{\ge 0}$, we define the \textit{dynamical zeta function} as
\[\dzeta{m}{\phi}=\exp\left(\sum_{n=1}^{\infty}\frac{t^n}{n}\sum_{x\in \per{n}{\phi}}\mult{\phi^n}{x}^m\right)\in K\llbracket t\rrbracket.\]
$\per{n}{\phi}=\{x\in \proj_{K} : \phi^{n}(x)=x\}=\fixpt{\phi^n}$ is the set of periodic points of period $n$ and 
	\begin{align*}
	\mult{\phi}{x}=
		\begin{cases}
		\phi'(x) & \text{if } x\neq \infty \\
		\psi'(0) & \text{if } x=\infty
		\end{cases},
	\end{align*}
where $\psi(z)=1/\phi(1/z)$, is the \textit{multiplier} of the fixed point $x\in \fixpt{\phi}$.

For $m=0$, $\dzeta{0}{\phi}$ is Artin-Mazur zeta function defined by Artin and Mazur in \cite{Artin-Mazur}.
\[\dzeta{0}{\phi}=\exp\left( \sum_{n=1}^{\infty}\frac{t^n}{n}\# \per{n}{\phi}\right).\]

Hinkkanen showed that $\dzeta{0}{\phi}$ is a rational function for $K=\C$ in \cite[Theorem 1]{Hinkkanen}. More precisely, he showed that there exists $N\in\Z_{\ge 0}$, $p_1,\dots p_N,q_1,\dots, q_N, l_1,\dots, l_N \in \Z_{\ge 0}$ such that
\[\dzeta{0}{\phi}=\frac{1}{(1-t)(1-dt)}\prod_{i=1}^{N}(1-t^{p_i q_i})^{l_i}.\]
He used techniques from complex dynamics. Lee pointed out the Hinkkanen's result is still valid for any algebraically closed field $K$ with characteristic $0$ in \cite[Theorem 1.1.]{Lee}

For $m>0$, Hatjispyros and Vivaldi showed the rationality of $\dzeta{1}{\phi}$ for a polynomial of special type $\phi(z)=z^d+c$ in \cite[Lemma 3.1.]{Hatjispyros-Vivaldi} and conjectured that $\dzeta{m}{\phi}$ is a rational function for quadratic polynomials in the same paper. Predrag Cvitanovic, Kim Hansen, Juri Rolf, and G{\'{a}}bor Vattay showed the rationality of $\dzeta{m}{\phi}$ for quadratic polynomials in \cite[Section 4.1.]{Cvitanovic}. Eremenko and Levin showed that $\dzeta{1}{\phi}$ is a rational function for each polynomial in \cite[Lemma 1]{Eremenko-Levin}.

For $m>0$, the rationality of $\dzeta{m}{\phi}$ in previous results is restricted to the case that $\phi$ is a polynomial. Our main result is the rationality of the zeta function $\dzeta{m}{\phi}$ for a rational function $\phi\in K(z)$ which is not supposed to be a polynomial.

\begin{thm}
Let $K$ be an algebraiclly closed field with characteristic $0$ and let $\phi \in K(z)$ be a  rational function of degree $\ge 2$. Assume that $\phi$ has no periodic point with multiplier 1. Then the dynamical zeta function $\dzeta{m}{\phi}$ is a rational function over $K$ for all $m\in \Z_{\ge 0}$ .
\end{thm}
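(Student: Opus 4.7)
The plan is to apply the Woods Hole (holomorphic Lefschetz) fixed-point formula to each iterate $\phi^n \colon \proj_K \to \proj_K$ with coefficients in the sheaf $\Omega_{\proj_K}^{\otimes m}$, equipped with the canonical $\mathcal{O}$-linear lifting $(\phi^n)^{\ast}\Omega_{\proj_K}^{\otimes m} \to \Omega_{\proj_K}^{\otimes m}$ coming from the differential. The hypothesis must be interpreted as ensuring that $\mult{\phi^n}{x} \neq 1$ for every $n \geq 1$ and every $x \in \fixpt{\phi^n}$, so that each fixed point is non-degenerate; the local datum $\det(1 - d\phi^n_x)$ then equals $1 - \mult{\phi^n}{x}$, and the fibrewise trace on $\Omega_{\proj_K}^{\otimes m}$ at $x$ is $\mult{\phi^n}{x}^m$. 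Writing $\Lambda_m(n) := \sum_i (-1)^i\, \mathrm{tr}\bigl((\phi^n)^{\ast} \mid \cohomo{i}{m}\bigr)$, the formula reads
\[
\Lambda_m(n) \;=\; \sum_{x \in \fixpt{\phi^n}} \frac{\mult{\phi^n}{x}^m}{1 - \mult{\phi^n}{x}}.
\]

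Next, the elementary identity $\frac{\lambda^m}{1-\lambda} - \frac{\lambda^{m+1}}{1-\lambda} = \lambda^m$ telescopes the denominators away: subtracting the case $m+1$ from the case $m$ gives
\[
\sum_{x \in \per{n}{\phi}} \mult{\phi^n}{x}^m \;=\; \Lambda_m(n) - \Lambda_{m+1}(n).
\]
Using $\Omega_{\proj_K} \cong \mathcal{O}(-2)$ one computes $\cohomo{0}{m} = 0$ and $\cohomo{1}{m}$ finite-dimensional for $m \geq 1$, while $H^0(\proj_K, \mathcal{O}) = K$ and $H^1(\proj_K, \mathcal{O}) = 0$. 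Functoriality of pullback gives $(\phi^n)^{\ast} = (\phi^{\ast})^n$ on cohomology; setting $V_m := \cohomo{1}{m}$ and $T_m := \phi^{\ast}|_{V_m}$, one obtains, for $m \geq 1$,
\[
\sum_{x \in \per{n}{\phi}} \mult{\phi^n}{x}^m \;=\; \mathrm{tr}(T_{m+1}^n) - \mathrm{tr}(T_m^n),
\]
with an extra $+1$ for $m = 0$ coming from the trivial action on $H^0(\proj_K, \mathcal{O})$.

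Inserting these expressions into the exponential defining $\dzeta{m}{\phi}$ and invoking the standard identity $\exp\bigl(\sum_{n \geq 1}\tfrac{t^n}{n}\mathrm{tr}(A^n)\bigr) = \det(1 - tA)^{-1}$ yields
\[
\dzeta{m}{\phi} = \frac{\det(1 - tT_m)}{\det(1 - tT_{m+1})} \quad (m \geq 1), \qquad \dzeta{0}{\phi} = \frac{1}{(1-t)\det(1 - tT_1)},
\]
both of which are manifestly rational over $K$.

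The main obstacle is the algebraic incarnation of Woods Hole in the first step: one must verify that at each non-degenerate fixed point $x$ of $\phi^n$ the local contribution on $\Omega_{\proj_K}^{\otimes m}$ is exactly $\mult{\phi^n}{x}^m/(1 - \mult{\phi^n}{x})$, working over an arbitrary algebraically closed field of characteristic $0$ rather than $\C$, and in particular handling the fixed point at $\infty$ correctly using the formula $\psi(z) = 1/\phi(1/z)$. A secondary subtlety is confirming that the hypothesis really prevents $\mult{\phi^n}{x} = 1$ for \emph{every} iterate $n$ (not only at primitive periods), so that Woods Hole is uniformly applicable to the whole family $\{\phi^n\}_{n \geq 1}$.
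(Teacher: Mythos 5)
Your proposal is correct and takes essentially the same route as the paper: apply the Woods Hole formula to each iterate $\phi^n$ with coefficients in $\Omega_{\proj_K}^{\otimes m}$ and the lifting given by $(d\phi^n)^{\otimes m}$, use the telescoping identity $\lambda^m=\frac{\lambda^m}{1-\lambda}-\frac{\lambda^{m+1}}{1-\lambda}$, and convert the trace sums into $\det(1-tA)^{\pm1}$ factors; your $m\ge 1$ and $m=0$ formulas agree with the paper's. The one step you compress is the multiplicativity $(\phi^n)^{*}=(\phi^{*})^{n}$ on cohomology: since the operator is the pullback composed with the map induced by $(d\phi)^{\otimes m}$ rather than a bare pullback, the paper devotes several functoriality lemmas (on the unit of adjunction, the differential, and the induced maps on cohomology) to proving exactly this, but the statement you invoke is the correct one.
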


This paper will be organized as follows. In Section \ref{preliminaries}, we recall some basic notions of the discrete dynamics on $\proj$ and some results on dynamical zeta functions. We devote Section \ref{woodshole} to recall the Woods Hole fixed point formula, which is the key tool of our proof for the main theorem. In Section \ref{main}, we prove the main theorem after some observation. In Section \ref{examples}, We construct some examples of rational function $\phi\in \C(z)$ that we can apply our main theorem and we give the explicit formula of $\dzeta{1}{\phi}$ for such $\phi$.

\section*{Acknowledgement}
I would like to show my greatest appreciation to Professor Nobuo Tsuzuki for his advice and helpful comments. I am deeply grateful to Professor Takuya Yamauchi for reading the manuscript carefully and telling me related works. I would like to Yuya Murakami for his technical advice. I would also like to thank Atsuhide Nagasaka, Hiroyuki Sunata, Junpei Kasama, and Masahiro Watanabe for their helpful comments. The author is supported by the WISE Program for AI Electronics, Tohoku University.

%
\section{Prelimiaries}\label{preliminaries}

In this section, we recall some basic notions on dynamical systems on $\proj$ associated with a rational function and we see some examples and results of dynamical zeta functions. Notation in this section follows Silverman's textbook \cite{Silverman}.
 
\subsection{Periodic points and its multipliers}
Let $K$ be a field. For a rational function $\phi\in K(z)$, we can regard $\phi$ as an endomorphism $\phi:\proj_K\to \proj_K$. For any integer $n\ge 0$, $\phi^n:\proj_K\to\proj_K$ denotes the \textit{n-th iterate} of $\phi$,
\begin{align*}
\phi^n = \underbrace{\phi\circ\phi\circ \cdots \circ \phi }_{n}.
\end{align*}

\begin{defin}
Let $\phi \in K(z)$ be a rational function.
	\begin{itemize}
		\item[$(1)$]A point $x\in\proj_K$ is said to be a \textit{fixed point} of $\phi$ if $\phi(x)=x$. We denote by $\fixpt{\phi}$ the set of fixed points of  $\phi$.
		\item[$(2)$]We say a point $x\in\proj_K$ is a \textit{periodic point} of period $n$ if $\phi^{n}(x)=x$. We denote by $\per{n}{\phi}=\fixpt{\phi^n}$ the set of periodic points of period $n$.
		\item[$(3)$]A periodic point $x$ is said to be \textit{of minimal period} $n$ if $x\in \per{n}{\phi}$ and $x\not\in \per{m}{\phi}$ for any $0<m<n$. We denote by $\mathrm{Per}_{d}^{**}(\phi)$ the set of periodic points of minimal period $n$.
		\item[$(4)$]The \textit{forward orbit} of $x\in\proj_K$ is defined by $\mathcal{O}_{\phi}(x)=\{\phi^{n}(x)\colon n\in\Z_{\ge 0}\}$.
	\end{itemize} 
\end{defin}

It is easy to show that we can decompose $\per{n}{\phi}$ into the disjoint union $\per{n}{\phi}=\bigcup_{d|n}\mathrm{Per}_{d}^{**}(\phi)$. Next, we define the multiplier of fixed points. Recall that the linear fractional transformation $\theta(z)=\dfrac{az+b}{cz+d}$ for $ \begin{pmatrix}a & b\\ c & d\end{pmatrix}\in\mathrm{GL}_{2}(K)$ defines an automorphism of $\proj_K$.

\begin{defin}
For $\phi\in K(z)$ and $\theta\in \pgl$ the \textit{linear conjugate} of $\phi$ by $\theta$ is the map $\theta\circ\phi\circ \theta^{-1}$.
\end{defin}

The linear conjugation of $\phi$ at $\theta$ yields the following commutative diagram.
\begin{center}
	\begin{tikzcd}
		\proj_K \arrow[d,"\theta"']\arrow[r,"\phi"]					& \proj_K \arrow[d,"\theta"]& \\
		\proj_K \arrow[r,"\theta\circ\phi\circ \theta^{-1}"']	& \proj_K.
	\end{tikzcd}
\end{center}

Given $x\in \fixpt{\phi}\setminus\{\infty\}$, we define the \textit{multiplier} $\mult{\phi}{x}$ of $\phi$ at $x$ by $\mult{\phi}{x}=\phi'(x)$. The multiplier is invariant under the linear conjugation.

\begin{prop}
Let $\phi\in K(z)$ be a rational function and $\theta\in\pgl$ be a fractional linear transformation. Then the followings hold.
	\begin{itemize}
		\item[$(1)$]$\fixpt{\theta\circ\phi\circ \theta^{-1}}=\theta(\fixpt{\phi})$.
		\item[$(2)$]For any $x\in\fixpt{\phi}$ , $\mult{\theta\circ\phi\circ \theta^{-1}}{\theta(x)}=\mult{\phi}{x}$ if $\theta(x) \neq \infty$.
	\end{itemize}
\end{prop}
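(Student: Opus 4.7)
For part (1), the plan is a direct set-theoretic computation. For the inclusion $\theta(\fixpt{\phi})\subseteq \fixpt{\theta\circ\phi\circ\theta^{-1}}$, I take $y\in\fixpt{\phi}$ and compute $(\theta\circ\phi\circ\theta^{-1})(\theta(y))=\theta(\phi(y))=\theta(y)$. For the reverse inclusion, if $z\in\fixpt{\theta\circ\phi\circ\theta^{-1}}$ and $y:=\theta^{-1}(z)$, then applying $\theta^{-1}$ to the fixed-point equation gives $\phi(y)=y$, so $z=\theta(y)\in\theta(\fixpt{\phi})$. This part is really just unwinding definitions and uses nothing beyond the invertibility of $\theta$.

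For part (2), the core idea is the chain rule. In the generic subcase where $x\neq\infty$ and $\theta(x)\neq\infty$, the multiplier on each side is computed as an ordinary derivative, and I would differentiate $\theta\circ\phi\circ\theta^{-1}$ at $z_0:=\theta(x)$ using the chain rule. Using the fixed-point relation $\phi(x)=x$, the factor $\theta'(\phi(\theta^{-1}(z_0)))=\theta'(x)$ cancels with $(\theta^{-1})'(z_0)=1/\theta'(x)$, leaving $\phi'(x)=\mult{\phi}{x}$.

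The remaining case, in which $x=\infty$ but $\theta(x)\neq\infty$, is where the subtlety lies, because on the $\phi$-side the multiplier is defined by switching to the local parameter at $\infty$ via $\psi(w)=1/\phi(1/w)$. The plan is to reduce to the previous case by introducing the involution $\iota(z)=1/z$ and rewriting
\[
\theta\circ\phi\circ\theta^{-1}=(\theta\circ\iota)\circ(\iota\circ\phi\circ\iota)\circ(\iota\circ\theta^{-1})=(\theta\circ\iota)\circ\psi\circ(\theta\circ\iota)^{-1}.
\]
Now $0$ is a fixed point of $\psi$, the linear fractional map $\theta\circ\iota\in\pgl$ sends $0$ to $\theta(\infty)\neq\infty$, and $\mult{\psi}{0}=\psi'(0)=\mult{\phi}{\infty}$ by definition. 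Applying the already-established finite-to-finite case to $\psi$ and the conjugating transformation $\theta\circ\iota$ gives the desired equality.

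The only real obstacle I anticipate is the bookkeeping in the $x=\infty$ case: one must make sure the reduction to the finite case is genuinely legitimate, i.e.\ that $\psi$ is still a rational function with $0$ as a fixed point of the same multiplier, and that $\theta\circ\iota$ is a well-defined element of $\pgl$ with $(\theta\circ\iota)(0)\neq\infty$. Once this rewriting is in place, everything collapses to one application of the chain rule, so no serious computation is needed.
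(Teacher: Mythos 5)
Your proposal is correct and follows essentially the same route as the paper, whose entire proof is that (1) is trivial and (2) is an easy consequence of the chain rule; your part (1) unwinding and your finite-to-finite chain-rule computation are exactly that argument made explicit. The only addition is your careful reduction of the case $x=\infty$, $\theta(x)\neq\infty$ to the finite case via the involution $\iota(z)=1/z$ and $\psi=\iota\circ\phi\circ\iota$, which is sound and merely fills in a detail the paper leaves implicit (in the paper's ordering the multiplier at $\infty$ is only defined after this proposition, precisely using its invariance).
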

\begin{proof}
The first statement is trivial. The second statement is an easy consequence of the chain rule.
\end{proof}

Using this property, we can extend the definition of the multiplier as below.

\begin{defin}
Let $\phi\in K(z)$ be a rational function. We define the \textit{multiplier} of $\phi$ at each $x\in\fixpt{\phi}$  as
	\begin{align*}
	\mult{\phi}{x}=
		\begin{cases}
		\phi'(x) & \text{if } x\neq \infty \\
		\psi'(0) & \text{if } x=\infty
		\end{cases},
	\end{align*}
where $\psi(z)=1/\phi(1/z)$, which is the linear conjugation of $\phi$ at $\theta(z)=1/z$. 
\end{defin}

\subsection{Dynamical zeta function}

\begin{defin}
Let $K$ be an algebraically closed field with characteristic 0. For a rational function $\phi\in K(z)$ and a non-negative integer $m\in\Z_{\ge 0}$, we define the \textit{dynamical zeta function} as
\[\dzeta{m}{\phi}=\exp\left(\sum_{n=1}^{\infty}\frac{t^n}{n}\sum_{x\in \per{n}{\phi}}\mult{\phi^n}{x}^m\right)\in K\llbracket t\rrbracket.\]
\end{defin}

\begin{rem}
Since the multiplier is invariant under the linear conjugation, $\dzeta{m}{\phi}$ is also invariant under the linear conjugation.
\end{rem}

In \cite[Section 2]{Hatjispyros-Vivaldi}, Hatjispyros and Vivaldi pointed out that the dynamical zeta function $\dzeta{m}{\phi}$ has the Eulerian product as follows. We denote by $\mathrm{Per}_{n}^{**}(\phi)/\sim$ the quotient set of $\mathrm{Per}_{n}^{**}(\phi)$ by the equivalent relation $x\sim y \Leftrightarrow \mathcal{O}_{\phi}(x)=\mathcal{O}_{\phi}(y)$.

\begin{thm}
\[\dzeta{m}{\phi}=\prod_{n=1}^{\infty}\prod_{x\in \mathrm{Per}_{n}^{**}(\phi)/\sim} (1-\mult{\phi^n}{x}^{m}t^{n})^{-1}\]
where the second product runs over a complete system of representatives of  $\mathrm{Per}_{n}^{**}(\phi)/\sim$.
\end{thm}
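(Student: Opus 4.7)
The plan is to take the logarithm of both sides of the claimed identity and compare the coefficients of $t^n$. Concretely, after applying $\log$ to $\dzeta{m}{\phi}$ we obtain the defining series $\sum_{n\ge 1}\frac{t^n}{n}\sum_{x\in\per{n}{\phi}}\mult{\phi^n}{x}^m$, while applying $\log$ to the proposed Euler product and expanding $-\log(1-u)=\sum_{k\ge1}u^k/k$ gives
\[
\sum_{d\ge 1}\sum_{[x]\in\mathrm{Per}_d^{**}(\phi)/\sim}\sum_{k\ge 1}\frac{1}{k}\,\mult{\phi^d}{x}^{mk}\,t^{dk}.
\]
The goal is therefore to show, for each fixed $n\ge 1$, that these two coefficients of $t^n$ agree.

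The first ingredient I would put in place is the chain rule for multipliers under iteration: for any $x\in\fixpt{\phi^d}$ and any positive integer $k$, $\mult{\phi^{dk}}{x}=\mult{\phi^d}{x}^k$. This is a direct consequence of the chain rule for derivatives once one also checks the $\infty$ case by conjugating with $\theta(z)=1/z$, which is allowed by the previous proposition on conjugation invariance of multipliers. The second ingredient is that $\mult{\phi^d}{\cdot}$ is constant along an orbit of $\phi$ of length $d$: if $x_i=\phi^i(x_0)$ for $i=0,\dots,d-1$, then the chain rule expresses $\mult{\phi^d}{x_i}$ as the product of $\phi'$ over the $d$ points $x_0,\dots,x_{d-1}$ in some cyclic order, hence it does not depend on $i$. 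The third ingredient is the combinatorics of orbits: an orbit in $\mathrm{Per}_d^{**}(\phi)$ has exactly $d$ points, and $\per{n}{\phi}=\bigsqcup_{d\mid n}\mathrm{Per}_d^{**}(\phi)$.

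Assembling these: the coefficient of $t^n$ in the log of the product is
\[
\sum_{dk=n}\sum_{[x]\in\mathrm{Per}_d^{**}(\phi)/\sim}\frac{1}{k}\mult{\phi^d}{x}^{mk}
=\sum_{d\mid n}\frac{1}{dk}\sum_{x\in\mathrm{Per}_d^{**}(\phi)}\mult{\phi^d}{x}^{mk},
\]
where we used that $\mult{\phi^d}{x}^{mk}$ is constant on each $\sim$-class and each class has $d$ elements, and $k=n/d$. Applying the chain rule $\mult{\phi^d}{x}^{n/d}=\mult{\phi^n}{x}$ turns this into
\[
\frac{1}{n}\sum_{d\mid n}\sum_{x\in\mathrm{Per}_d^{**}(\phi)}\mult{\phi^n}{x}^m
=\frac{1}{n}\sum_{x\in\per{n}{\phi}}\mult{\phi^n}{x}^m,
\]
which matches the coefficient coming from $\log\dzeta{m}{\phi}$.

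There is no serious obstacle here; the whole argument is a bookkeeping exercise once the chain rule and orbit structure are set up, and the only thing to be a bit careful about is the $\infty$ case of the chain rule, which is handled by conjugating by $\theta(z)=1/z$ and using the invariance of multipliers under linear conjugation established just above. Convergence is not an issue either, since both sides are manipulated as formal power series in $K\llbracket t\rrbracket$.
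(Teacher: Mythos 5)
Your proposal is correct and follows essentially the same route as the paper: both arguments rest on the chain rule $\mult{\phi^{dk}}{x}=\mult{\phi^{d}}{x}^{k}$, the constancy of multipliers along orbits, the fact that an orbit of minimal period $d$ has $d$ points, and the decomposition $\per{n}{\phi}=\bigsqcup_{d\mid n}\mathrm{Per}_{d}^{**}(\phi)$. The only cosmetic difference is that you take logarithms of both sides and compare coefficients of $t^{n}$, whereas the paper rewrites the sum inside the exponential and resums it into the product; the bookkeeping is identical.
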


\begin{proof}
Using the chain rule, we have
	\begin{itemize}
	\item $\mult{\phi^{md}}{x}=\mult{\phi^{d}}{x}^m$ for any $x\in \per{d}{\phi}$,
	\item $\mult{\phi^n}{x}=\mult{\phi^n}{y}$ if $x\sim y$.
	\end{itemize}
Since we can decompose $\per{n}{\phi}$ into the disjoint union $\per{n}{\phi}=\bigcup_{d|n}\mathrm{Per}_{d}^{**}(\phi)$, we obtain 
\begin{align*}
\sum_{x\in \per{n}{\phi}}\mult{\phi^n}{x}^m &= \sum_{d|n}\sum_{x\in \mathrm{Per}_{d}^{**}(\phi)}\mult{\phi^n}{x}^m\\
&=\sum_{d|n}\sum_{x\in \mathrm{Per}_{d}^{**}(\phi)/\sim} d(\mult{\phi^{d}}{x}^m)^{n/d}.
\end{align*}
Therefore, 
	\begin{align*}
		\dzeta{m}{\phi}&=\exp\left(\sum_{n=1}^{\infty}\sum_{d|n}\sum_{\mathrm{Per}_{d}^{**}(\phi)/\sim} \frac{(\mult{\phi^{d}}{x}^m)^{n/d}}{n/d}t^n\right)\\
		&=\exp\left(\sum_{l=1}^{\infty}\sum_{d=1}^{\infty}\sum_{\mathrm{Per}_{d}^{**}(\phi)/\sim} \frac{(\mult{\phi^{d}}{x}^mt^d)^{l}}{l}\right)\\
		&=\exp\left(-\sum_{d=1}^{\infty}\sum_{\mathrm{Per}_{d}^{**}(\phi)/\sim}\log (1-\mult{\phi^d}{x}^mt^d)\right)\\
		&=\prod_{d=1}^{\infty}\prod_{\mathrm{Per}_{d}^{**}(\phi)/\sim}(1-\mult{\phi^d}{x}^mt^d)^{-1}.\\
	\end{align*}
\end{proof}

\begin{ex} Let $\phi(z)=z^d$ for an integer $d\ge 2$. Since $\phi^n(z)=z^{d^{n}}$, we have $\per{n}{\phi}=\{0\}\cup \mu_{d^{n}-1}=\{0\}\cup \{\zeta\in K \colon \zeta^{d^{n}-1}=1\}$. Therefore, 
	\begin{align*}
		\mult{\phi^{n}}{x}=
			\begin{cases}
			0	& (x=0) \\
			d^n & (x\in \mu_{d^{n}-1})
			\end{cases}
	\end{align*}
and 
\[\sum_{x\in\per{n}{\phi}} \mult{\phi^{n}}{x}^m=(d^n-1)d^{nm}=d^{n(m+1)}-d^{nm}.\]
 So we obtain
\[\dzeta{m}{\phi}=\frac{1-d^{m} t}{1-d^{m+1} t}.\]
\end{ex}

\begin{ex}
Let $T_{d}$ be the \textit{d-th Chebyshev polynomial} satisfying $2\cos (d\theta)=T_d(2\cos \theta)$. It is well known that $T_{d}^{n}=T_{d^n}$, $\per{n}{T_d}=\{\zeta+\zeta^{-1}\colon \zeta \in \mu_{d-1}\}\cup\{\zeta+\zeta^{-1}\colon \zeta \in \mu_{d+1}\}\cup\{\infty\}$ and
	\begin{align*}
	\mult{T_d}{x}=
		\begin{dcases}
		d & (x=\zeta+\zeta^{-1} , \zeta\in \mu_{d-1}\setminus\{\pm 1\})\\
		-d & (x=\zeta+\zeta^{-1} , \zeta\in \mu_{d+1}\setminus\{\pm 1\})\\
		d^2 & (x= \pm 2)\\
		0 & (x=\infty)
		\end{dcases}.
	\end{align*}
For details, see \cite[Section~6.2]{Silverman}. Therefore, we obtain
	\begin{align*}
	\dzeta{m}{T_d}=
		\begin{dcases}
		\frac{1-d^{m}t}{1-d^{2m}t} \frac{1}{1-d^{m+1}t} & (d:\text{even},m:\text{even})\\
		\frac{1-d^{m}t}{1-d^{2m}t} 	& (d:\text{even},m:\text{odd})\\
		\left(\frac{1-d^{m}t}{1-d^{2m}t}\right)^2 \frac{1}{1-d^{m+1}t} & (d:\text{odd},m:\text{even})\\
		\frac{1-d^{m}t}{(1-d^{2m}t)^2} & (d:\text{odd},m:\text{odd})
		\end{dcases}.
	\end{align*}
This result is firstly obtained by Hatjispyros in \cite[Theorem 1]{Hatjispyros}. 
\end{ex}


%
%
%
\section{Woods Hole fixed point formula}\label{woodshole}

In this section, we review the Woods Hole fixed point formula which is the key tool of our proof. This formula is also called \textit{Atiyah-Bott fixed point formula} since Atiyah and Bott proved this formula in differential geometry in \cite[Theorem A]{AtBo} and \cite[Theorem A]{AB}. A purely algebraic proof can be found in SGA5 \cite[Expos{\'{e}} III, Corollaire 6.12]{SGA5} and in \cite[Theorem A.4.]{Taelman}. For details and other applications of this formula, see \cite{SGA5}, \cite{Taelman}, \cite{Beauville}, \cite{Kond}, and \cite{Ramirez}.

Let $X$ be a Noetherian scheme over an algebraically closed field $K$, $\mathcal{F}$ and $\mathcal{G}$ be coherent sheaves on $X$, and  $\varphi: \mathcal{G}\to\mathcal{F}$ is a sheaf homomorphism. For $x\in X$, we define $\mathcal{F}(x)=\mathcal{F}_x\otimes_{\mathcal{O}_{X,x}}\mathcal{O}_{X,x}/\mathfrak{m}_x$. Then we obtain a natural $\mathcal{O}_{X,x}/\mathfrak{m}_x$-linear map $\varphi(x): \mathcal{G}(x)\to \mathcal{F}(x)$. If $f:X\to X$ is an endomorphism and $\mathcal{G}=f^{*}\mathcal{F}$, we have a map $\widetilde{\varphi}^{(p)}:H^{p}(X,\mathcal{F})\to H^{p}(X,\mathcal{F})$ satisfying
	\begin{center}
		\begin{tikzcd}
			H^{p}(X,\mathcal{F}) \arrow[rd,"\widetilde{\varphi}^{(p)}"]\arrow[d]& \\
			H^{p}(X,f^{*}\mathcal{F}) \arrow[r]& H^{p}(X,\mathcal{F})
		\end{tikzcd}.
	\end{center}
$H^{p}(X,\mathcal{F})\to H^{p}(X,f^{*}\mathcal{F})$ is the pull-back on cohomology and $H^{p}(X,f^{*}\mathcal{F})\to H^{p}(X,\mathcal{F})$ is the homomorphism induced by $\varphi:f^{*}\mathcal{F}\to\mathcal{F}$. Note that the pull-back $H^{p}(X,\mathcal{F})\to H^{p}(X,f^{*}\mathcal{F})$ is decomposed as $H^{p}(X,\mathcal{F})\to H^{p}(X,f_{*}f^{*}\mathcal{F})\to H^{p}(X,f^{*}\mathcal{F})$, where $H^{p}(X,\mathcal{F})\to H^{p}(X,f_{*}f^{*}\mathcal{F})$ is induced by the sheaf homomorphism $\mathcal{F}\to f_{*}f^{*}\mathcal{F}$. Moreover, note that if $f$ is affine, $H^{p}(X,f_{*}f^{*}\mathcal{F})\to H^{p}(X,f^{*}\mathcal{F})$ is an isomorphism \cite[III, Excercise 8.2.]{Hartshorne}.

\begin{thm}(Woods Hole Fixed Point Formula, \cite[Theorem A.4.]{Taelman})
Let $X$ be a smooth proper scheme over an algebraically closed field $K$ and let $f: X \to X$ be an endomorphism. Let $\mathcal{F}$ be a locally free $\mathcal{O}_{X}$ module of finite rank and let $\varphi: f^{*}\mathcal{F}\to \mathcal{F}$ be a homomorphism of $\mathcal{O}_{X}$ module. Assume that the graph $\Gamma_{f}\subset X\times X$ and  the diagonal $\Delta\subset X\times X$ intersect transversally in $X\times X$. Then the following identity holds.
\[\sum_{p}(-1)^{p}\tr{\widetilde{\varphi}^{(p)}}{H^{p}(X,\mathcal{F})}=\sum_{x\in \fixpt{f}}\frac{\tr{\varphi(x)}{\mathcal{F}(x)}}{\det (1-df(x)\colon \Omega_{X/K}(x))} \]
where $df:f^{*}\Omega_X\to\Omega_X$ is the differential of $f$.
\end{thm}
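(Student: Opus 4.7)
The plan is to express the alternating sum of cohomological traces as a localized intersection product on $X \times X$, and then use the transversality of $\Gamma_f$ and $\Delta$ to reduce to local computations at each fixed point.

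First, I would reinterpret the Lefschetz number $L(f,\varphi) := \sum_p (-1)^p \tr{\widetilde{\varphi}^{(p)}}{H^p(X,\mathcal{F})}$ in the derived category. The map $\widetilde{\varphi}^{(p)}$ is the composition of the pullback $H^p(X,\mathcal{F}) \to H^p(X,f^*\mathcal{F})$ with the map induced by $\varphi$, so $L(f,\varphi)$ is the trace of an endomorphism of the perfect complex $R\Gamma(X,\mathcal{F})$ in the sense of Grothendieck--Serre duality for the smooth proper $K$-scheme $X$. This reduces the problem to producing a formula for the trace of this particular endomorphism.

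Next, I would move the computation to $X \times X$. Via the projection formula together with the K\"{u}nneth isomorphism, $L(f,\varphi)$ can be rewritten as a trace of a kernel supported on the graph $\Gamma_f$, and Serre duality then turns this trace into the pairing of $\mathcal{O}_{\Gamma_f}$ (twisted by $\varphi$) against $\mathcal{O}_\Delta$ inside $X \times X$. By transversality, the scheme-theoretic intersection $\Gamma_f \cap \Delta$ is the reduced scheme $\fixpt{f}$, so the global pairing decomposes as a sum of local contributions indexed by $x \in \fixpt{f}$.

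Finally, for each fixed point $x$, I would compute the local contribution. Choose local parameters $z_1,\dots,z_n$ in $\mathcal{O}_{X,x}$; then $\mathcal{O}_\Delta$ admits a Koszul resolution given by the regular sequence $z_i \otimes 1 - 1 \otimes z_i$, and the local pairing becomes a finite alternating sum involving exterior powers of this map restricted to the residue fiber at $x$. The local factor of $\varphi$ contributes the numerator $\tr{\varphi(x)}{\mathcal{F}(x)}$, while the Koszul alternating sum, with $df(x)$ acting on $\Omega_{X/K}(x)$, evaluates to $\det(1 - df(x) \colon \Omega_{X/K}(x))$ in the denominator. Transversality is precisely the condition ensuring this determinant is nonzero. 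The main obstacle is the bookkeeping in the middle step, namely the rigorous identification of the global Lefschetz trace with an intersection pairing on $X \times X$ via Grothendieck duality, and in the final step the verification that the alternating Koszul sum collapses to the claimed determinant with the correct signs; it is ultimately in this step that the factor $\det(1 - df(x))$ materializes.
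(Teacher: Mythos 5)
You should note at the outset that the paper does not prove this theorem at all: it is quoted as a known result, with the proof delegated to Taelman's appendix and to SGA5 (and, in the analytic setting, Atiyah--Bott), so there is no in-paper argument to compare against. Your outline is essentially the standard algebraic proof found in those references: view the Lefschetz number as the trace of an endomorphism of the perfect complex $R\Gamma(X,\mathcal{F})$, transport it to $X\times X$ via K\"unneth and Grothendieck--Serre duality so that it becomes a pairing of a kernel supported on $\Gamma_{f}$ against $\mathcal{O}_{\Delta}$, use transversality to identify the scheme-theoretic intersection $\Gamma_{f}\cap\Delta$ with the finite reduced scheme $\fixpt{f}$ and split the pairing into local contributions, and compute each local term from the Koszul resolution of the diagonal. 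Two caveats. First, the steps you set aside as ``bookkeeping''---the compatibility of the trace with duality and K\"unneth, i.e.\ the identification of the global Lefschetz trace with the localized pairing---are precisely where nearly all of the work in Taelman/SGA5 lies, so as written this is a correct plan rather than a proof. Second, the mechanism producing the denominator is slightly different from your description: at a transversal fixed point the local term is a residue symbol (equivalently, a pairing computed from the Koszul complex on the regular sequence $z_{i}-f^{*}z_{i}$), and $\det\bigl(1-df(x)\colon\Omega_{X/K}(x)\bigr)$ enters as the Jacobian of that sequence through the transformation rule for residues, using that transversality makes $1-df(x)$ invertible so that the local intersection ring is the residue field; it is not literally an alternating sum $\sum_{i}(-1)^{i}\mathrm{tr}\,\wedge^{i}df(x)$ that you then place in the denominator, although that identity for $\det(1-df(x))$ does appear along the way. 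With those points made precise, your route coincides with the cited proof.
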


For a morphism $f:X\to Y$, the differential $df:f^{*}\Omega_Y\to\Omega_X$ locally comes from 
\[ \Omega_B\otimes_{B}A\to \Omega_A; db\otimes a\mapsto ad(\varphi(b)),\]
where $\spec{B}\subset X$ and $\spec{A} \subset Y$ are affine open subsets and $f$ corresponds to a ring homomorphism $\varphi: B\to A$.

The statement that $\Gamma_{f}\subset X\times X$ and  the diagonal $\Delta\subset X\times X$ intersect transversally in $X\times X$ means that $\Gamma_{f}$ and $\Delta$ meet with intersection multiplicity $1$ at each point $x\in \Gamma_{f}\cap\Delta$.

\begin{rem}
It is known that the followings are equivalent.
\begin{itemize}
\item[$(1)$]The graph $\Gamma_{f}\subset X\times X$ and  the diagonal $\Delta\subset X\times X$ intersect transversally in $X\times X$. 
\item[$(2)$]$\det (1-df(x))\neq 0$ for all $x$ satisfying $f(x)=x$.
\end{itemize}
For details, see \cite[Proposition I.1]{Beauville}. Therefore, in the case that $X=\proj_K$ and  $\phi\in K(z)$, the transversality condition equals to that $\phi$ has no fixed point with multiplier $1$.
\end{rem}

%
%
%
\section{Main theorem}\label{main}

In this section, we prove, by applying Woods Hole fixed point formula, the rationality of dynamical zeta functions $\dzeta{m}{\phi}$ for each \textit{completely transversal} rational function $\phi \in K(z)$.

\subsection{Notation}\label{preparation}

\begin{defin}
Let $K$ be an algebraiclly closed field with characteristic $0$ and let $\phi \in K(z)$ be a rational function. We say that $\phi$ is \textit{completely transversal} if $\mult{\phi^n}{x}\neq 1$ for any $n\in\Z_{\ge 0}$ and  $x\in\per{n}{\phi}$.
\end{defin}

\begin{defin}
For $\phi\in K(z)$ with no fixed point with multiplier $1$ and $m\in\Z_{\ge 0}$, we define
\[T_{m}(\phi)=\sum_{x\in \fixpt{\phi}}\frac{\mult{\phi}{x}^{m}}{1-\mult{\phi}{x}}.\]
\end{defin}

\begin{rem}\label{decomposition}
Note that 
\[ \sum_{x\in\fixpt{\phi}}\mult{\phi}{x}^{m}=T_{m}(\phi)-T_{m+1}(\phi)\]
for all rational functions $\phi \in K(z)$ with no fixed point with multiplier $1$. So we have
\[ \sum_{x\in\per{n}{\phi}}\mult{\phi^n}{x}^{m}=T_{m}(\phi^n)-T_{m+1}(\phi^n)\]
for all $n\in\Z_{>0}$ if $\phi$ is completely transversal.
\end{rem}

\begin{defin}
Let $K$ be an algebraically closed field with characteristic $0$ and let $\phi \in K(z)$ be a completely transversal rational function of degree $\ge 2$. 
We define $\lzeta{m}{\phi}\in K\llbracket t\rrbracket$ by
\[\lzeta{m}{\phi}=\exp\left( \sum_{n=1}^{\infty}\frac{t^n}{n} T_{m}(\phi^n)\right).\]
\end{defin}

\begin{rem}\label{quotient}
By Remark \ref{decomposition}, we have
\[\dzeta{m}{\phi}=\frac{\lzeta{m}{\phi}}{\lzeta{m+1}{\phi}}\]
if $\phi$ is completely transversal.
\end{rem}

We state the main theorem as below. A proof will given in Section \ref{conclution} after some observation.
\begin{thm}\label{maintheorem}
Let $K$ be an algebraiclly closed field with characteristic $0$ and let $\phi \in K(z)$ be a completely transversal rational function of degree $\ge 2$.  Then the following hold.
\[\lzeta{m}{\phi}=\dfrac{\det (1-tD_m^1(\phi))}{\det (1-tD_{m}^0(\phi))}\in K(t),\]
where $D_m^p(\phi): H^{p}(\proj_K,\Omega_{\proj_K}^{\otimes m})\to H^{p}(\proj_K,\Omega_{\proj_K}^{\otimes m})$ is the map associated to the sheaf homomorphism $(d\phi)^{\otimes m}:{\phi}^{*}\Omega_{\proj_K}^{\otimes m}\to \Omega_{\proj_K}^{\otimes m}$.
Especially, $\dzeta{m}{\phi}\in K(t)$ for any $m\in\Z_{\ge0}$. 
\end{thm}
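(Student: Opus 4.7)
The plan is to apply the Woods Hole fixed point formula to each iterate $\phi^n$ with $\mathcal{F}=\Omega_{\proj_K}^{\otimes m}$ and $\varphi=(d\phi^n)^{\otimes m}$, compute both sides explicitly, and then assemble the answer using the standard trace--determinant identity $\det(1-tA)^{-1}=\exp\bigl(\sum_{n\ge1}\tr(A^n)\,t^n/n\bigr)$ for an endomorphism $A$ of a finite-dimensional $K$-vector space.

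First I would verify that the hypotheses of Woods Hole hold for $\phi^n$. The sheaf $\Omega_{\proj_K}^{\otimes m}\cong\mathcal{O}(-2m)$ is locally free of rank one on the smooth proper scheme $\proj_K$, and by the remark following the Woods Hole theorem, the transversality of $\Gamma_{\phi^n}$ and $\Delta$ is equivalent to $\det(1-d\phi^n(x))\neq0$ at every fixed point of $\phi^n$; since $\dim \proj_K=1$, this determinant equals $1-\mult{\phi^n}{x}$, which is nonzero precisely because $\phi$ is completely transversal. Next I would compute the right-hand side pointwise: at any $x\in\fixpt{\phi^n}$, the cotangent fiber $\Omega_{\proj_K}(x)$ is one-dimensional, $d\phi^n(x)$ acts on it by multiplication by $\mult{\phi^n}{x}$, and consequently $\varphi(x)=(d\phi^n(x))^{\otimes m}$ acts on $\mathcal{F}(x)$ by multiplication by $\mult{\phi^n}{x}^m$. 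The right-hand side of Woods Hole thus collapses to
\[
\sum_{x\in\fixpt{\phi^n}}\frac{\mult{\phi^n}{x}^m}{1-\mult{\phi^n}{x}}=T_m(\phi^n).
\]

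For the left-hand side I would identify the induced map $\widetilde{(d\phi^n)^{\otimes m}}^{(p)}$ on $H^p(\proj_K,\Omega_{\proj_K}^{\otimes m})$ with the $n$-fold composite $D_m^p(\phi)^n$. This is the key functoriality step: using the chain rule $(d\phi^n)^{\otimes m}=(d\phi)^{\otimes m}\circ\phi^*(d\phi)^{\otimes m}\circ\cdots\circ(\phi^{n-1})^*(d\phi)^{\otimes m}$ together with compatibility of pullback on cohomology with composition of morphisms, one shows that the endomorphism of $H^p(\proj_K,\Omega_{\proj_K}^{\otimes m})$ attached to $\phi^n$ coincides with the $n$-th iterate of the one attached to $\phi$. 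Granting this, the left-hand side of Woods Hole equals $\tr(D_m^0(\phi)^n)-\tr(D_m^1(\phi)^n)$, giving
\[
T_m(\phi^n)=\tr(D_m^0(\phi)^n)-\tr(D_m^1(\phi)^n)\qquad(n\ge1).
\]

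Finally, substituting this into the definition of $\lzeta{m}{\phi}$, summing the series and applying the trace--determinant identity to the finite-dimensional endomorphisms $D_m^p(\phi)$ on the $K$-vector spaces $H^p(\proj_K,\Omega_{\proj_K}^{\otimes m})$ (which are finite-dimensional since $\proj_K$ is proper and $\Omega^{\otimes m}$ is coherent), yields the claimed formula
\[
\lzeta{m}{\phi}=\frac{\det(1-tD_m^1(\phi))}{\det(1-tD_m^0(\phi))}\in K(t).
\]
Since $\phi$ is completely transversal, Remark \ref{quotient} gives $\dzeta{m}{\phi}=\lzeta{m}{\phi}/\lzeta{m+1}{\phi}$, a ratio of rational functions, hence $\dzeta{m}{\phi}\in K(t)$. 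The main technical obstacle I expect is the functoriality check in step three, namely verifying carefully that the decomposition of the pullback on cohomology through $f_*f^*\mathcal{F}$ is compatible with composition of endomorphisms so that $D_m^p(\phi^n)=D_m^p(\phi)^n$; once this is in hand, the rest is a direct assembly.
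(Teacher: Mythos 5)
Your proposal is correct and follows essentially the same route as the paper: apply the Woods Hole formula to each iterate $\phi^n$ (using complete transversality to guarantee the transversality hypothesis), compute the local terms to get $T_m(\phi^n)$, establish the functoriality $D_m^p(\phi^n)=D_m^p(\phi)^n$ (which is exactly what the paper's Lemmas \ref{lem_adjunction}--\ref{functoriality_of_pull-back} and Proposition \ref{lem_cohomology} are devoted to), and then conclude with the trace--determinant identity and Remark \ref{quotient}. The step you flag as the main technical obstacle is indeed where the paper spends most of its effort, but your outline of how to handle it matches the paper's argument.
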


\begin{rem}
For $m>0$, we have
\[\dzeta{m}{\phi}=\frac{\dyndet{m}}{\dyndet{m+1}}\]
since $\Omega_{\proj_K} \cong \mathcal{O}_{\proj_K}(-2)$,  $\Omega_{\proj_K}^{\otimes m} \cong \mathcal{O}_{\proj_K}(-2m)$ and 
	\begin{align*}
		\dim_K H^{i}(\proj_K,\mathcal{O}_{\proj_K}(-2m))=
			\begin{cases}
				2m-1 & (i=1)\\
					0		& (\text{otherwise}).
			\end{cases}
	\end{align*}
\end{rem}

\subsection{Connecton between multipliers and sheaf cohomologies}

\begin{lem}\label{lem_localvsgrobal}
Let $\phi \in K(z)$ be a rational function with no fixed point with multiplier $1$. Then for any positive integer $m\in\Z_{>0}$, the following holds.  
\[T_m(\phi)=\sum_{i=0}^{1}(-1)^{i}\tr{D_m^i(\phi)}{\cohomo{i}{m}},\]
where $D_m^i(\phi):\cohomo{i}{m}\to\cohomo{i}{m}$ is the $K$-linear map assosiated to the sheaf homomorphism $(d\phi)^{\otimes m}:\phi^{*}\Omega_{\proj_K}^{\otimes m}\to \Omega_{\proj_K}^{\otimes m}$.
\end{lem}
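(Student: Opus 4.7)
The plan is to apply the Woods Hole fixed point formula directly to $\mathcal{F} = \Omega_{\proj_K}^{\otimes m}$ with the homomorphism $\varphi = (d\phi)^{\otimes m}\colon \phi^{*}\Omega_{\proj_K}^{\otimes m}\to \Omega_{\proj_K}^{\otimes m}$, and then match the geometric side of the formula with the definition of $T_{m}(\phi)$.

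First, the hypothesis that $\phi$ has no fixed point with multiplier $1$ is, by the remark following the statement of Woods Hole, exactly the transversality of $\Gamma_{\phi}$ and $\Delta$ in $\proj_K\times\proj_K$. Hence the theorem applies and yields
\[
\sum_{i=0}^{1}(-1)^{i}\tr{D_{m}^{i}(\phi)}{\cohomo{i}{m}}
=\sum_{x\in\fixpt{\phi}}\frac{\tr{\varphi(x)}{\Omega_{\proj_K}^{\otimes m}(x)}}{\det\bigl(1-d\phi(x)\colon \Omega_{\proj_K}(x)\bigr)}.
\]
(The sum over $p$ truncates at $p=1$ because $\proj_K$ has dimension one.)

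Next I would compute the local contribution at each fixed point. Because $\proj_K$ is a smooth curve, both $\Omega_{\proj_K}(x)$ and $\Omega_{\proj_K}^{\otimes m}(x)$ are one-dimensional, so the determinant and trace reduce to scalar multiplications. For a fixed point $x\neq\infty$, I would take the local coordinate $z$ centered appropriately: $\Omega_{\proj_K}$ is locally trivialized by $dz$, and from the defining formula $d\phi(\phi^{*}dz)=\phi'(z)\,dz$ one reads off that $d\phi(x)$ acts on $\Omega_{\proj_K}(x)$ by multiplication by $\phi'(x)=\mult{\phi}{x}$. Consequently $(d\phi)^{\otimes m}(x)$ acts on $\Omega_{\proj_K}^{\otimes m}(x)$ by multiplication by $\mult{\phi}{x}^{m}$, and the local contribution becomes
\[
\frac{\mult{\phi}{x}^{m}}{1-\mult{\phi}{x}}.
\]
For the fixed point $x=\infty$, I would perform the same computation after passing to the chart $w=1/z$: in that chart $\phi$ is represented by $\psi(w)=1/\phi(1/w)$, the sheaf $\Omega_{\proj_K}$ is trivialized by $dw$ near $w=0$, and $d\phi$ acts on $\Omega_{\proj_K}(\infty)$ by multiplication by $\psi'(0)$, which is precisely $\mult{\phi}{\infty}$ by definition. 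Summing over $x\in\fixpt{\phi}$ then reproduces exactly $T_{m}(\phi)$.

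There is no serious obstacle: the content of the lemma is essentially a translation of the Woods Hole formula for the sheaf $\Omega^{\otimes m}$ into the language of multipliers. The only point requiring some care is verifying that the intrinsic endomorphism $d\phi(x)$ on the cotangent line really is scalar multiplication by the multiplier in both local charts, and in particular that the chart change $w=1/z$ is consistent with the definition of $\mult{\phi}{\infty}=\psi'(0)$; this follows from the invariance of the multiplier under linear conjugation together with the coordinate-independence of the Woods Hole statement.
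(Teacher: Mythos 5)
Your proposal is correct and follows essentially the same route as the paper: apply the Woods Hole formula to $\Omega_{\proj_K}^{\otimes m}$ with $\varphi=(d\phi)^{\otimes m}$, use the equivalence of transversality with the absence of multiplier-one fixed points, and identify each local term as $\mult{\phi}{x}^{m}/(1-\mult{\phi}{x})$. The only (harmless) difference is that the paper first conjugates $\phi$ so that $\infty$ is not a fixed point and computes only in the affine chart, whereas you treat the fixed point at $\infty$ directly in the chart $w=1/z$.
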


\begin{proof}
After changing a coordinate, we may assume that $\infty \not\in \fixpt{\phi}$. We apply Woods Hole fixed point formula for $X=\proj_K$, $\mathcal{F}=\sheaf$, $f=\phi$ and $\varphi=(d\phi)^{\otimes m}:\phi^{*}\sheaf\to\sheaf$ and obtain
\[\sum_{x\in\fixpt{\phi}}\frac{\tr{d\phi^{\otimes m}(x)}{\Omega_{\proj_K}^{\otimes m}(x)}}{\det (1-d\phi(x)\colon \Omega_{\proj_K}(x))}=\sum_{i=0}^{1}(-1)^{i}\tr{D_m^i(\phi)}{\cohomo{i}{m}}.\]
For $x\in\fixpt{\phi}$, we take a local parameter $t=z-x$ of $\proj_K$ at $x$. Then we have \begin{itemize}
\item $\mathcal{O}_{\proj_K,x}\cong K[z]_{(t)}$,
\item $\mathcal{O}_{\proj_K,x}/\mathfrak{m}_{x}\cong K$ via the evalutation at $t=0$, and
\item $\Omega_{\proj_K,x}\cong K[z]_{(t)}dt$.
\end{itemize}
Therefore, we have
 \[(d\phi)_{x}(dt)=d(\phi^{*}t)=d\phi(t+x)=\phi'(t+x)dt\]
and $\Omega_{\proj_K}(x)=K[z]_{(t)}dt\otimes_{K[z]_{(t)}}(K[z]_{(t)}/(t)K[z]_{(t)})\cong Kdt$ via $a(t)dt\otimes 1\mapsto a(0)dt$. So
\[d\phi(x)(dt)=\phi'(x)dt=\mult{\phi}{x}dt.\]
This yields that $d\phi(x)=\mult{\phi}{x} \mathrm{id}$. So we have
	\begin{align*}
		T_m(\phi)&=\sum_{x\in \fixpt{\phi}}\frac{\mult{\phi}{x}^{m}}{1-\mult{\phi}{x}}\\
		&=\sum_{x\in\fixpt{\phi}}\frac{\tr{d\phi^{\otimes m}(x)}{\Omega_{\proj_K}^{\otimes m}(x)}}{\det (1-d\phi(x)\colon \Omega_{\proj_K}(x))}.
	\end{align*}
Summing up, we conclude
\[T_m(\phi)=\sum_{i=0}^{1}(-1)^{i}\tr{D_m^i(\phi)}{\cohomo{i}{m}}.\]
\end{proof}

\subsection{Lemmas on sheaf cohomologies}

We prepare some lemmas for cohomology of coherent sheaves on schemes. First we recall that there are an adjunction $f^{*}\dashv f_{*}$ for a morphism $f:X\to Y$ of schemes and there are two natural transformations $\epsilon_{f}: f^{*}f_{*} \to 1$ called the \textit{counit} and $\eta_{f}: 1\to f_{*}f^{*}$ called the \textit{unit}.  Although it is abusing notation, we use the same notation $\eta_{f}$ for corresponding sheaf homomorphism $\eta_{f}:\mathcal{F}\to f_{*}f^{*}\mathcal{F}$ for a coherent sheaf $\mathcal{F}$ on $Y$.

The next lemma shows that $\eta_{f}$ has the functoriality as following. 
\begin{lem}\label{lem_adjunction}
Let $X$,$Y$ and $Z$ be Noetherian schemes and $f:X \to Y$ and $g:Y \to Z$ be morphisms . Let $\mathcal{F}$ be a quasi-coherent sheaf on $Z$. Then the following diagram commutes.
	\begin{center}
		\begin{tikzcd}
			\mathcal{F} \arrow[rd,"\eta_{g\circ f}"]\arrow[d,"\eta_{g}"']& \\
			g_{*}g^{*}\mathcal{F} \arrow[r,"g_{*}\eta_{f}"']& g_{*}f_{*}f^{*}g^{*}\mathcal{F}. 
		\end{tikzcd}
	\end{center}
\end{lem}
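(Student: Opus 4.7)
The plan is to identify this diagram with the standard formula for the unit of a composite adjunction. We have two adjunctions $f^{*}\dashv f_{*}$ and $g^{*}\dashv g_{*}$, and their composition yields an adjunction $f^{*}g^{*}\dashv g_{*}f_{*}$. Under the canonical natural isomorphisms $(g\circ f)^{*}\cong f^{*}g^{*}$ (pullback functoriality) and $(g\circ f)_{*}\cong g_{*}f_{*}$ (immediate from $(g\circ f)^{-1}U=f^{-1}g^{-1}U$), this composite adjunction is identified with the adjunction attached to $g\circ f$, and the lemma becomes the assertion that $\eta_{g\circ f}$ is the unit of the composite adjunction.

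The essential step is the purely categorical fact that, for composable adjunctions $L_{i}\dashv R_{i}$ ($i=1,2$) with units $\eta^{(i)}$, the unit of $L_{1}L_{2}\dashv R_{2}R_{1}$ is the composition
\[
\mathrm{Id}\xrightarrow{\eta^{(2)}}R_{2}L_{2}\xrightarrow{R_{2}\eta^{(1)}L_{2}}R_{2}R_{1}L_{1}L_{2}.
\]
A short calculation starting from the hom-set bijection $\mathrm{Hom}(L_{1}L_{2}A,B)\cong\mathrm{Hom}(L_{2}A,R_{1}B)\cong\mathrm{Hom}(A,R_{2}R_{1}B)$ applied to $\mathrm{id}_{L_{1}L_{2}A}$ delivers this formula. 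Specialising to $L_{1}=f^{*}$, $L_{2}=g^{*}$ and evaluating at $\mathcal{F}$ produces precisely $\eta_{g\circ f,\mathcal{F}}=g_{*}(\eta_{f,\,g^{*}\mathcal{F}})\circ\eta_{g,\mathcal{F}}$, which is the claimed commutativity.

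The main obstacle I anticipate is the bookkeeping required to verify that the canonical isomorphisms $(g\circ f)^{*}\cong f^{*}g^{*}$ and $(g\circ f)_{*}\cong g_{*}f_{*}$ are compatible with the adjunction bijections on both sides, so that $\eta_{g\circ f}$ really corresponds to the composite unit in the abstract sense. This is standard but notationally fiddly. If one prefers to avoid it, an alternative route is an affine-local computation: taking $\spec{C}\subset Z$, $\spec{B}\subset g^{-1}(\spec{C})$, $\spec{A}\subset f^{-1}(\spec{B})$ with ring maps $C\to B\to A$ and $\mathcal{F}|_{\spec{C}}=\widetilde{M}$, both legs of the diagram reduce to the canonical map $M\to M\otimes_{C}A$, $m\mapsto m\otimes 1$, since the unit for an affine morphism is literally tensoring with $1$. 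Gluing over coverings of $Z$, $Y$, $X$ then completes the proof.
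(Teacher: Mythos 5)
Your proposal is correct, and in fact it contains the paper's argument as its fallback: the paper proves the lemma exactly by the affine-local computation you sketch at the end --- reduce to ring maps $C\to B\to A$ with $\mathcal{F}=\widetilde{M}$, identify $g_{*}g^{*}\mathcal{F}$ with ${}_{C}(M\otimes_{C}B)^{\sim}$ and $g_{*}f_{*}f^{*}g^{*}\mathcal{F}$ with ${}_{C}((M\otimes_{C}B)\otimes_{B}A)^{\sim}\cong{}_{C}(M\otimes_{C}A)^{\sim}$, and observe that both legs send $m\mapsto m\otimes 1$. Your primary route is genuinely different: you invoke the general categorical fact that the unit of a composite adjunction $L_{1}L_{2}\dashv R_{2}R_{1}$ is $R_{2}\eta^{(1)}L_{2}\circ\eta^{(2)}$, and then must check that the canonical identifications $(g\circ f)^{*}\cong f^{*}g^{*}$ and $(g\circ f)_{*}\cong g_{*}f_{*}$ carry $\eta_{g\circ f}$ to that composite unit --- note that the paper's diagram already uses these identifications silently, since the target of $\eta_{g\circ f}$ is literally $(g\circ f)_{*}(g\circ f)^{*}\mathcal{F}$. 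The abstract route buys generality (no quasi-coherence or Noetherian hypotheses, works for arbitrary ringed spaces) at the cost of exactly the compatibility bookkeeping you identify; the local computation buys a two-line verification at the cost of a ``the problem is local'' reduction that both you and the paper treat somewhat casually (an affine $\spec{C}\subset Z$ need not have affine preimage under $g$ or $g\circ f$, so one must either pass to a further affine cover upstairs and check sections there, or check on stalks). Either way the argument goes through, so there is no gap --- just be aware that the gluing step deserves one more sentence than ``glue over coverings'' if written out in full.
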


\begin{proof}
Since the problem is local, we may assume that $X$,$Y$ and $Z$ are affine. We assume that $X=\spec{A}$, $Y=\spec{B}$, $Z=\spec{C}$ and $f:X \to Y$ and $g:Y \to Z$ come from $\varphi: B\to A$ and $\psi: C\to B$, respectively. We use the notation ${}_{C}N$ (\textit{resp}. ${}_{C}L$) for $B$-module $N$(\textit{resp}. $A$-module $L$) if we regard $N$ (\textit{resp}. $L$) as $C$-module via $\psi: C\to B$ (\textit{resp}. $\phi\circ\psi:C\to A$). 
Then there exsists a $C$ module $M$ such that $\mathcal{F}=\tilde{M}$ and we have $g_{*}g^{*}\mathcal{F}= {}_{C}(M\otimes_{C}B)^{\sim}$, $g_{*}f_{*}f^{*}g^{*}\mathcal{F}={}_{C}(M\otimes_{C}A)^{\sim}={}_{C}((M\otimes_{C}B)\otimes_B A)^{\sim}$. So the desired commutative diagram comes from the following diagrams.
\begin{center}
	\begin{tikzcd}
		M \arrow[d] \arrow[r]           & {}_{C}(M\otimes_{C}A) \arrow[d, no head, equal] & m \arrow[d, maps to] \arrow[r, maps to] & m\otimes 1 \arrow[d, maps to] \\
		{}_{C}(M\otimes_{C}B) \arrow[r] & {}_{C}((M\otimes_CB)\otimes_BA),                      & m\otimes 1 \arrow[r, maps to]           & (m\otimes 1)\otimes 1 .       
	\end{tikzcd}
\end{center}
\end{proof}

The next lemma shows that the differential $df$ has the functoriality as following. 
\begin{lem}\label{lem_differential}
Let $X$,$Y$ and $Z$ be Noetherian schemes over $K$ and $f:X \to Y$ and $g:Y \to Z$ be morphisms  over $K$. Then the following diagram commutes.
	\begin{center}
		\begin{tikzcd}
		f^{*}g^{*}\Omega_{Z} \arrow[rd,"d(g\circ f)"]\arrow[d,"f^{*}dg"']& \\
		f^{*}\Omega_{Y} \arrow[r,"df"']& \Omega_X.
		\end{tikzcd}
	\end{center}
\end{lem}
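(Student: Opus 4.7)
The plan is to follow the same strategy as in the proof of Lemma~\ref{lem_adjunction}: since commutativity of a diagram of sheaf homomorphisms is a local question, I would reduce to the affine case and then verify the identity on generators of the module of Kähler differentials. Concretely, after restricting to compatible affine opens I may assume $X=\spec{A}$, $Y=\spec{B}$, $Z=\spec{C}$, with $f$ and $g$ induced by $K$-algebra homomorphisms $\varphi\colon B\to A$ and $\psi\colon C\to B$, so that $g\circ f$ corresponds to $\varphi\circ\psi\colon C\to A$.

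Under this translation, $f^{*}g^{*}\Omega_{Z}$ corresponds to $\Omega_{C/K}\otimes_{C}B\otimes_{B}A\cong \Omega_{C/K}\otimes_{C}A$, $f^{*}\Omega_{Y}$ to $\Omega_{B/K}\otimes_{B}A$, and $\Omega_{X}$ to $\Omega_{A/K}$. Using the local description of the differential recalled just after the statement of the Woods Hole formula, the three arrows become the $A$-linear maps $f^{*}dg\colon dc\otimes 1\mapsto d\psi(c)\otimes 1$, $df\colon db\otimes 1\mapsto d\varphi(b)$, and $d(g\circ f)\colon dc\otimes 1\mapsto d(\varphi(\psi(c)))$. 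Composing the first two on a generator $dc\otimes 1$ gives $df(d\psi(c)\otimes 1)=d(\varphi(\psi(c)))$, which coincides with the value of $d(g\circ f)$ on the same element. Since both composite maps are $A$-linear and agree on the generating set $\{dc\otimes 1\mid c\in C\}$ of $\Omega_{C/K}\otimes_{C}A$, they are equal as $A$-module homomorphisms, hence as sheaf homomorphisms on the chosen affine.

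The only care required is that the canonical identifications $f^{*}g^{*}(\widetilde{M})\cong \widetilde{M\otimes_{C}A}$ be set up so as to be compatible with $f^{*}dg$, $df$, and $d(g\circ f)$ as sheaf morphisms; this bookkeeping is parallel to that performed explicitly in Lemma~\ref{lem_adjunction}, and I do not expect it to present any substantive obstacle, since the statement is essentially the chain rule for Kähler differentials translated into categorical language.
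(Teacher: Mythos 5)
Your proposal is correct and follows essentially the same route as the paper: reduce to the affine case $X=\spec{A}$, $Y=\spec{B}$, $Z=\spec{C}$ and check on the generators $dc\otimes 1$ that both composites send them to $d(\varphi\psi(c))$, which is exactly the paper's argument.
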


\begin{proof}
Since the problem is local, we may assume that $X$,$Y$ and $Z$ are affine. We assume that $X=\spec{A}$, $Y=\spec{B}$, $Z=\spec{C}$ and $f:X \to Y$ and $g:Y \to Z$ come from $\varphi: B\to A$ and $\psi: C\to B$, respectively. Then $df:f^{*}\Omega_{Y}\to\Omega_{X}$, $dg:g^{*}\Omega_{Z}\to\Omega_{Y}$ and $d(g\circ f):f^{*}g^{*}\Omega_{Z}\to\Omega_{X}$ correspond to $\Omega_B\otimes_{B}A\to \Omega_A; db\otimes a\mapsto ad(\varphi(b))$, $\Omega_C\otimes_{C}B\to \Omega_B; dc\otimes b\mapsto bd(\psi(c))$ and $\Omega_C\otimes_{C}A\to \Omega_A; dc\otimes a\mapsto ad(\varphi\psi(c))$, respectively. So the desired commutative diagram comes from the following diagrams.
\begin{center}
	\begin{tikzcd}
		\Omega_{C}\otimes_CA \arrow[d] \arrow[rd] &          & dc\otimes 1 \arrow[d, maps to] \arrow[rd, maps to] &                            \\
		\Omega_{B}\otimes_BA \arrow[r]            & \Omega_A, & d(\psi(c))\otimes 1 \arrow[r, maps to]             & d(\varphi\psi(c))\otimes 1.
	\end{tikzcd}
\end{center}
\end{proof}

\begin{lem}\label{comparison}
Let $\mathcal{A}$ be an abelian category and $f:A\to B$ be a morphism in $\mathcal{A}$.  Let $0\to A\to I^{\bullet}$ and $0\to B\to J^{\bullet}$ be complexes in $\mathcal{A}$. If each $J^{n}$ is injective, and if $0\to A\to I^{\bullet}$ is exact, then there exists a chain map $I^{\bullet}\to J^{\bullet}$ making the following diagram commute.
	\begin{center}
		\begin{tikzcd}
			0 \arrow[r] & A \arrow[r] \arrow[d, "f"] & I^{0} \arrow[r] \arrow[d] & I^{1} \arrow[r] \arrow[d] & {\cdots} \\
			0 \arrow[r] & B \arrow[r]                & J^{0} \arrow[r]           & J^{1} \arrow[r]           & {\cdots}.
		\end{tikzcd}
	\end{center}
Moreover, any two such chain maps are homotopic.
\end{lem}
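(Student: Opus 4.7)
The plan is to establish this by the standard inductive construction used throughout homological algebra for lifts into injective resolutions. Write $d_I$ and $d_J$ for the differentials and $\iota_A : A \to I^0$, $\iota_B : B \to J^0$ for the augmentations. For existence I would construct the chain map $\phi^n : I^n \to J^n$ by induction on $n$. In the base case, $\iota_A$ is a monomorphism by exactness of $0 \to A \to I^0$, and $\iota_B \circ f : A \to J^0$ is a morphism into the injective object $J^0$, so there exists $\phi^0 : I^0 \to J^0$ with $\phi^0 \circ \iota_A = \iota_B \circ f$.

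For the inductive step, suppose $\phi^0, \ldots, \phi^n$ are defined making the diagram commute up to column $n$. The key observation is that $d_J^{\,n} \circ \phi^n : I^n \to J^{n+1}$ vanishes on $\ker d_I^{\,n}$: for $n \geq 1$ this kernel equals $\operatorname{im} d_I^{\,n-1}$ by exactness, and $d_J^{\,n} \circ \phi^n \circ d_I^{\,n-1} = d_J^{\,n} \circ d_J^{\,n-1} \circ \phi^{n-1} = 0$; for $n = 0$ the kernel is $\operatorname{im} \iota_A$, and $d_J^{\,0} \circ \phi^0 \circ \iota_A = d_J^{\,0} \circ \iota_B \circ f = 0$ since $\iota_B$ and $d_J^{\,0}$ compose to zero in the complex $J^\bullet$. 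Hence $d_J^{\,n} \circ \phi^n$ factors through $I^n / \ker d_I^{\,n} \cong \operatorname{im} d_I^{\,n} \hookrightarrow I^{n+1}$, and injectivity of $J^{n+1}$ supplies the desired extension $\phi^{n+1}$.

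For uniqueness up to homotopy, suppose $\phi^\bullet$ and $\psi^\bullet$ are two such chain maps; by linearity it suffices to show that any chain map $\chi^\bullet : I^\bullet \to J^\bullet$ with $\chi^0 \circ \iota_A = 0$ is null-homotopic. I would build the homotopy $h^n : I^n \to J^{n-1}$ inductively with $h^0 = 0$. To construct $h^{n+1}$ given $h^0, \ldots, h^n$, consider $\chi^n - d_J^{\,n-1} \circ h^n : I^n \to J^n$ (with the convention $h^0 = 0$ in degree zero); a direct check using $\chi^\bullet$ being a chain map together with the inductive identity $\chi^{n-1} = d_J^{\,n-2} \circ h^{n-1} + h^n \circ d_I^{\,n-1}$ shows that this morphism vanishes on $\ker d_I^{\,n}$. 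Thus it factors through $\operatorname{im} d_I^{\,n} \hookrightarrow I^{n+1}$, and injectivity of $J^n$ provides an extension $h^{n+1} : I^{n+1} \to J^n$ satisfying $\chi^n = d_J^{\,n-1} \circ h^n + h^{n+1} \circ d_I^{\,n}$.

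The substantive content of the argument is in two places: correctly handling the base case, where the hypothesis $\iota_B \circ d_J^{\,0} = 0$ (built into $J^\bullet$ being a complex) is what allows the induction to start, and the repeated use of exactness of $I^\bullet$ to identify $\ker d_I^{\,n}$ with $\operatorname{im} d_I^{\,n-1}$ so that the relevant morphisms factor through subobjects of $I^{n+1}$. Once these factorizations are set up the extensions exist automatically by injectivity; the main bookkeeping obstacle is simply keeping the indices and the homotopy identity aligned through the induction.
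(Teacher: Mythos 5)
Your argument is correct: it is the standard inductive comparison-theorem construction, using injectivity of each $J^{n}$ to extend along the factorizations through $\operatorname{im} d_I^{n}\hookrightarrow I^{n+1}$ supplied by exactness of $0\to A\to I^{\bullet}$, with the base cases handled properly via $d_J^{0}\circ\iota_B=0$. The paper does not prove the lemma itself but simply cites Rotman (Theorem 6.16), and your proof is precisely that standard argument, so there is no substantive difference in approach.
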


\begin{proof}
See \cite[Theorem 6.16]{Rotman}.
\end{proof}

We review how the map $H^{p}(Y,f_{*}f^{*}\mathcal{F})\to H^{p}(X,f^{*}\mathcal{F})$ is constructed in general. Let $0\to f^{*}\mathcal{F}\to J^{\bullet}$ and $0\to f_{*}f^{*}\mathcal{F}\to I^{\bullet}$ be injective resolutions of $f^{*}\mathcal{F}$ and $f_{*}f^{*}\mathcal{F}$, respectively. Applying the functor $f_{*}$, we obtain a complex $0\to f_{*}f^{*}\mathcal{F}\to f_{*}J^{\bullet}$. Note that the complex $f_{*}J^{\bullet}$ consists of injective sheaves. By Lemma \ref{comparison}, we obtain a chain map $I^{\bullet}\to f_{*}J^{\bullet}$ and this chain map gives the map $H^{p}(Y,f_{*}f^{*}\mathcal{F})\to H^{p}(X,f^{*}\mathcal{F})$.

The next lemma shows that the map $H^{p}(Y,f_{*}f^{*}\mathcal{F})\to H^{p}(X,f^{*}\mathcal{F})$ has the functoriality as following. 
\begin{lem}\label{functoriality_of_pull-back}
\begin{itemize}
\item[$(1)$]Let $X$ and $Y$ be Noetherian schemes and $f:X\to Y$ be a morpshim. Let $\mathcal{F}$ and  $\mathcal{G}$ be sheaves of $\mathcal{O}_{Y}$ module and $\varphi : \mathcal{F}\to \mathcal{G}$ be a morphism of $\mathcal{O}_{Y}$ module. Then the following diagram commutes.
	\begin{center}
		\begin{tikzcd}
			{H^{p}(Y,f_{*}f^{*}\mathcal{F})} \arrow[d] \arrow[r] & {H^{p}(X,f^{*}\mathcal{F})} \arrow[d] \\
			{H^{p}(Y,f_{*}f^{*}\mathcal{G})} \arrow[r]           & {H^{p}(X,f^{*}\mathcal{G})},
		\end{tikzcd}
	\end{center}
	where $H^{p}(Y,f_{*}f^{*}\mathcal{F})\to H^{p}(Y,f_{*}f^{*}\mathcal{G})$ and $H^{p}(X,f^{*}\mathcal{F})\to H^{p}(X,f^{*}\mathcal{G})$ are induced by $f_{*}f^{*}\varphi : f_{*}f^{*}\mathcal{F}\to f_{*}f^{*}\mathcal{G}$ and $f^{*}\varphi: f^{*}\mathcal{F}\to f^{*}\mathcal{G}$, respectively.
\item[$(2)$]Let $X$,$Y$ and $Z$ be Noetherian schemes and let $f:X \to Y$ and $g:Y \to Z$ be morphisms. Let $\mathcal{F}$ be a  sheaf of $\mathcal{O}_{Z}$ module. Then the following diagram commutes.
	\begin{center}
		\begin{tikzcd}
			{H^{p}(Z,g_{*}f_{*}f^{*}g^{*}\mathcal{F})} \arrow[d] \arrow[rd] &                                  \\
			{H^{p}(Y,f_{*}f^{*}g^{*}\mathcal{G})} \arrow[r]                 & {H^{p}(X,f^{*}g^{*}\mathcal{F})}.
		\end{tikzcd}
	\end{center}
\end{itemize}
\end{lem}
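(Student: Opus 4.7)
The plan is to reduce both diagrams to the uniqueness-up-to-homotopy clause of Lemma \ref{comparison}, once the pull-back map $H^{p}(Y,f_{*}f^{*}\mathcal{F})\to H^{p}(X,f^{*}\mathcal{F})$ has been unfolded at the level of injective resolutions as in the paragraph preceding the statement. In both parts I will build, using Lemma \ref{comparison}, all the chain maps that represent the arrows of the diagram, and then argue that the two compositions in each diagram are two chain maps extending the \emph{same} morphism in augmented degree $0$; hence they are chain homotopic and induce the same map on $H^{p}$.

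For part $(1)$, I would first fix injective resolutions $f^{*}\mathcal{F}\to J_{\mathcal{F}}^{\bullet}$ and $f^{*}\mathcal{G}\to J_{\mathcal{G}}^{\bullet}$ on $X$, together with injective resolutions $f_{*}f^{*}\mathcal{F}\to I_{\mathcal{F}}^{\bullet}$ and $f_{*}f^{*}\mathcal{G}\to I_{\mathcal{G}}^{\bullet}$ on $Y$. Using that $f_{*}$ preserves injectives (already invoked in the construction of the pull-back map above the lemma), $f_{*}J_{\mathcal{F}}^{\bullet}$ and $f_{*}J_{\mathcal{G}}^{\bullet}$ become injective resolutions of $f_{*}f^{*}\mathcal{F}$ and $f_{*}f^{*}\mathcal{G}$ on $Y$. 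Four applications of Lemma \ref{comparison} then produce chain maps: $I_{\mathcal{F}}^{\bullet}\to f_{*}J_{\mathcal{F}}^{\bullet}$ and $I_{\mathcal{G}}^{\bullet}\to f_{*}J_{\mathcal{G}}^{\bullet}$ lifting the identity (these induce the horizontal arrows of the square); a chain map $J_{\mathcal{F}}^{\bullet}\to J_{\mathcal{G}}^{\bullet}$ lifting $f^{*}\varphi$, whose pushforward $f_{*}J_{\mathcal{F}}^{\bullet}\to f_{*}J_{\mathcal{G}}^{\bullet}$ represents the right vertical; and a chain map $I_{\mathcal{F}}^{\bullet}\to I_{\mathcal{G}}^{\bullet}$ lifting $f_{*}f^{*}\varphi$, representing the left vertical. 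The two ways around the resulting square give two chain maps $I_{\mathcal{F}}^{\bullet}\to f_{*}J_{\mathcal{G}}^{\bullet}$, both extending the common morphism $f_{*}f^{*}\mathcal{F}\to f_{*}f^{*}\mathcal{G}\to f_{*}J_{\mathcal{G}}^{0}$ on augmented degree $0$. By the uniqueness part of Lemma \ref{comparison} they are chain homotopic and hence induce the same map on $H^{p}$.

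For part $(2)$ the setup is analogous with three schemes. I would fix injective resolutions $f^{*}g^{*}\mathcal{F}\to J^{\bullet}$ on $X$, $f_{*}f^{*}g^{*}\mathcal{F}\to I^{\bullet}$ on $Y$, and $g_{*}f_{*}f^{*}g^{*}\mathcal{F}\to K^{\bullet}$ on $Z$. The crucial observation is the identification $g_{*}f_{*}J^{\bullet}=(g\circ f)_{*}J^{\bullet}$, which yields an injective resolution of $g_{*}f_{*}f^{*}g^{*}\mathcal{F}$ on $Z$. Lemma \ref{comparison} now produces a chain map $I^{\bullet}\to f_{*}J^{\bullet}$ inducing the bottom horizontal, a chain map $K^{\bullet}\to g_{*}I^{\bullet}$ inducing the top vertical, and a chain map $K^{\bullet}\to g_{*}f_{*}J^{\bullet}$ directly inducing the diagonal. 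The composition $K^{\bullet}\to g_{*}I^{\bullet}\to g_{*}f_{*}J^{\bullet}$, where the second arrow is $g_{*}$ applied to $I^{\bullet}\to f_{*}J^{\bullet}$, and the diagonal chain map both extend the common degree-$0$ augmentation $g_{*}f_{*}f^{*}g^{*}\mathcal{F}\to g_{*}f_{*}J^{0}$; uniqueness in Lemma \ref{comparison} again forces them to be chain homotopic, so the triangle commutes on $H^{p}$.

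The main obstacle I expect is purely bookkeeping: for each of the squares of chain maps built by several independent applications of Lemma \ref{comparison}, one must verify that the two resulting compositions genuinely coincide on the augmented degree $0$, so that the uniqueness clause applies to them simultaneously. This requires a careful diagram chase through the various adjunction units, pushforwards, and pullbacks, but needs no further input beyond Lemma \ref{comparison} itself and the fact that $f_{*}$ preserves injectives.
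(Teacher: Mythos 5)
Your proposal is correct and takes essentially the same approach as the paper: both parts are proved by unfolding the pull-back map via injective resolutions, pushing forward a chain map produced by Lemma \ref{comparison}, and invoking its uniqueness-up-to-homotopy clause to compare the two composite chain maps over the same augmented degree-$0$ morphism. One small inaccuracy: $f_{*}J^{\bullet}$ (and $g_{*}f_{*}J^{\bullet}$) is in general only a complex of injective sheaves, not an injective \emph{resolution}, since exactness may fail; but your argument only uses injectivity of its terms together with exactness of the genuine resolutions serving as sources, exactly as in the paper, so nothing breaks.
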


\begin{proof}
For the first claim, let $0\to f^{*}\mathcal{F}\to I^{\bullet}$ and $0\to f^{*}\mathcal{G}\to J^{\bullet}$ be injective resolutions of $f^{*}\mathcal{F}$ and $f^{*}\mathcal{G}$, respectively. Then we obtain complexes $0\to f_{*}f^{*}\mathcal{F}\to f_{*}I^{\bullet}$ and $0\to f_{*}f^{*}\mathcal{G}\to f_{*}I^{\bullet}$.  By Lemma \ref{comparison}, we have a chain map $I^{\bullet}\to J^{\bullet}$ which makes the following diagram commute.
	\begin{center}
		\begin{tikzcd}
			0 \arrow[r] & f^{*}\mathcal{F} \arrow[r] \arrow[d, "f^{*}\varphi"] & I^{0} \arrow[r] \arrow[d] & I^{1} \arrow[r] \arrow[d] & {\cdots} \\
			0 \arrow[r] & f^{*}\mathcal{G} \arrow[r]                           & J^{0} \arrow[r]           & J^{1} \arrow[r]           & {\cdots}.
		\end{tikzcd}
	\end{center}
Applying the functor $f_{*}$, we obtain two complexes $0\to f_{*}f^{*}\mathcal{F}\to f_{*}I^{\bullet}$ and $0\to f_{*}f^{*}\mathcal{G}\to f_{*}J^{\bullet}$ and a chain map $f_{*}I^{\bullet}\to f_{*}J^{\bullet}$. Note that $f_{*}I^{\bullet}$ and $f_{*}J^{\bullet}$ consist of injective sheaves. Let $0\to f_{*}f^{*}\mathcal{F}\to I^{'\bullet}$ and $0\to f_{*}f^{*}\mathcal{G}\to J^{'\bullet}$ be injective resolutions of $f_{*}f^{*}\mathcal{F}$ and $f_{*}f^{*}\mathcal{G}$, respectively. By Lemma \ref{comparison}, we have chain maps $I^{'\bullet}\to J^{'\bullet}$, $I^{'\bullet}\to f_{*}I^{\bullet}$, and $J^{'\bullet}\to f_{*}J^{\bullet}$. Moreover, since the chain map $I^{'\bullet}\to f_{*}J^{\bullet}$ is unique up to homotopy, the following diagram commutes up to homotopy. 
	\begin{center}
		\begin{tikzcd}
			I^{'\bullet} \arrow[d] \arrow[r] & f_{*}I^{\bullet} \arrow[d] \\
			J^{'\bullet} \arrow[r]            & f_{*}J^{\bullet}.          
		\end{tikzcd}
	\end{center}
Therefore, the following diagram commutes.
\begin{center}
		\begin{tikzcd}
			{H^{p}(Y,f_{*}f^{*}\mathcal{F})} \arrow[d] \arrow[r] & {H^{p}(X,f^{*}\mathcal{F})} \arrow[d] \\
			{H^{p}(Y,f_{*}f^{*}\mathcal{G})} \arrow[r]           & {H^{p}(X,f^{*}\mathcal{G})}.
		\end{tikzcd}
	\end{center}
Next we prove the second claim. Let $0\to f^{*}g^{*}\mathcal{F}\to K^{\bullet}$ be an injective resolution of $f^{*}g^{*}\mathcal{F}$. Applying the functor $f_{*}$, we have a complex $0\to f_{*}f^{*}g^{*}\mathcal{F}\to f_{*}K^{\bullet}$. Let $0\to f_{*}f^{*}g^{*}\mathcal{F}\to J^{\bullet}$ be an injective resolution of $f_{*}f^{*}g^{*}\mathcal{F}$. By Lemma \ref{comparison}, we have a chain map $J^{\bullet}\to f_{*}K^{\bullet}$ which makes the following diagram commute.
	\begin{center}
		\begin{tikzcd}
			0 \arrow[r] & f_{*}f^{*}g^{*}\mathcal{F} \arrow[r] \arrow[d, equal] & J^{0} \arrow[r] \arrow[d] & J^{1} \arrow[r] \arrow[d] & {\cdots} \\
			0 \arrow[r] & f_{*}f^{*}g^{*}\mathcal{F} \arrow[r]                           & f_{*}K^{0} \arrow[r]           & f_{*}K^{1} \arrow[r]           & {\cdots}.
		\end{tikzcd}
	\end{center}
For an injective resolution $0\to g_{*}f^{*}f^{*}g^{*}\mathcal{F}\to I^{\bullet}$ of $g_{*}f^{*}f^{*}g^{*}\mathcal{F}$, we obtain chain maps $I^{\bullet}\to g_{*}J^{\bullet}$ and $I^{\bullet}\to g_{*}f_{*}K^{\bullet}$. Since the chain map $I^{\bullet}\to g_{*}f_{*}K^{\bullet}$ is unique up to homotopy, the following diagram commutes up to homotopy.
	\begin{center}
		\begin{tikzcd}
			I^{\bullet} \arrow[d] \arrow[rd] &                       \\
			g_{*}J^{\bullet} \arrow[r]       & g_{*}f_{*}K^{\bullet}.
		\end{tikzcd}
	\end{center}
Therefore, the following diagram commutes.
	\begin{center}
		\begin{tikzcd}
			{H^{p}(Z,g_{*}f_{*}f^{*}g^{*}\mathcal{F})} \arrow[d] \arrow[rd] &                                  \\
			{H^{p}(Y,f_{*}f^{*}g^{*}\mathcal{G})} \arrow[r]                 & {H^{p}(X,f^{*}g^{*}\mathcal{F})}.
		\end{tikzcd}
	\end{center}
\end{proof}

\begin{prop}\label{lem_cohomology}
Let $X$ be a Noetherian scheme over $K$ and let $f:X\to X$ and $g:X\to X$ be endomorphisms over $K$. Then for all $p,m \in \Z_{\ge 0}$ the following diagram commutes.
	\begin{center}
		\begin{tikzcd}
			H^{p}(X,\sheaf) \arrow[d, "D_m^p(g)"'] \arrow[rd, "D_m^p(g\circ f)"]& \\
			H^{p}(X,\sheaf) \arrow[r, "D_m^p(f)"'] & H^{p}(X,\sheaf),
		\end{tikzcd}
	\end{center}	
where $D_m^p(f): H^{p}(X,\sheaf)\to H^{p}(X,\sheaf)$ is the map associated to the sheaf homomorphism $(df)^{\otimes m}:f^{*}\Omega_{X}^{\otimes m}\to \Omega_{X}^{\otimes m}$.
\end{prop}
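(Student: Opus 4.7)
The plan is to decompose each of $D_m^p(f)$, $D_m^p(g)$, and $D_m^p(g\circ f)$ into three elementary pieces --- the unit $\eta_h$ of the adjunction, the natural map $\alpha_h\colon H^p(X,h_{*}h^{*}\mathcal{F})\to H^p(X,h^{*}\mathcal{F})$ that enters the definition of the pullback on cohomology, and the induced map of the differential $(dh)^{\otimes m}$ --- and then reassemble the three decompositions by means of the functoriality lemmas already established.

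First I would rewrite
\[ D_m^p(h) \;=\; H^p\bigl((dh)^{\otimes m}\bigr)\circ \alpha_h\circ H^p(\eta_h) \]
for $h\in \{f,g,g\circ f\}$. Each of the three factors making up $D_m^p(g\circ f)$ then admits a known decomposition along the composition $g\circ f$: Lemma \ref{lem_adjunction} yields $\eta_{g\circ f}=(g_{*}\eta_f)\circ \eta_g$, Lemma \ref{lem_differential} (after taking $m$-th tensor powers and using that $f^{*}$ commutes with tensor products) yields $(d(g\circ f))^{\otimes m}=(df)^{\otimes m}\circ f^{*}(dg)^{\otimes m}$, and Lemma \ref{functoriality_of_pull-back}$(2)$ factors $\alpha_{g\circ f}$ through the intermediate group $H^p(X,f_{*}f^{*}g^{*}\sheaf)$.

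Next I would stack these decompositions into a ladder of squares so that the outer perimeter spells out $D_m^p(g\circ f)$ while two consecutive strips read $D_m^p(g)$ followed by $D_m^p(f)$. The crucial intermediate move is to slide the ``pullback by $f$'' piece (the composite $\alpha_f\circ H^p(\eta_f)$) past the induced map of $(dg)^{\otimes m}$ on cohomology: the $\eta_f$ half commutes by naturality of the unit $\eta_f$, and the $\alpha_f$ half commutes by Lemma \ref{functoriality_of_pull-back}$(1)$ applied to the morphism $\varphi=(dg)^{\otimes m}\colon g^{*}\sheaf\to \sheaf$. After this swap the outer composition visibly factors as $D_m^p(f)\circ D_m^p(g)$.

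The main obstacle is purely bookkeeping: every intermediate sheaf is a string of pullbacks and pushforwards applied to $\sheaf$, and one must track the canonical identification $(g\circ f)^{*}\sheaf = f^{*}g^{*}\sheaf$ and verify that it is respected by each of the three named lemmas at the source and target of every arrow in the diagram. Once this is arranged the desired triangle commutes by patching together the three functoriality squares.
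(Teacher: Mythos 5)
Your proposal is correct and follows essentially the same route as the paper: the paper likewise decomposes each $D_m^p(h)$ into unit, pushforward-to-pullback comparison, and induced differential, then patches the triangle together using Lemma \ref{lem_adjunction}, Lemma \ref{lem_differential} (tensored $m$ times), the naturality of the unit $\eta_f$, and both parts of Lemma \ref{functoriality_of_pull-back}. The only difference is presentational: the paper draws the three sheaf-level commutative diagrams first and then applies $H^p(X,-)$, while you organize the same squares as a ladder on cohomology.
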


\begin{proof}
By combining Lemma \ref{lem_adjunction}, Lemma \ref{lem_differential}, and the naturality of unit $\eta_{f}: 1\to f_{*}f^{*}$, we have following three commutative diagrams.

\begin{center}
	\begin{tikzcd}
	\sheaf \arrow[d, "\eta_{g}"'] \arrow[rd, "\eta_{g\circ f}"] &                                                      &                                                                          &             \\
	g_{*}g^{*}\sheaf \arrow[r, "g_{*}\eta_{f}"']                    & g_{*}f_{*}f^{*}g^{*}\sheaf, &                                                                          &             \\
	g^{*}\sheaf \arrow[d, "dg^{\otimes m}"'] \arrow[r, "\eta_{f}"]              & f_{*}f^{*}g^{*}\sheaf \arrow[d, "f_{*}f^{*}dg^{\otimes m}"] & f^{*}g^{*}\sheaf \arrow[d, "f^{*}dg^{\otimes m}"'] \arrow[rd, "d{(g \circ f)^{\otimes m}}"] &             \\
	\sheaf \arrow[r, "\eta_{f}"']                                   & f_{*}f^{*}\sheaf, & f^{*}\sheaf \arrow[r, "df^{\otimes m}"']                                        & \sheaf.
	\end{tikzcd}
\end{center}

Taking cohomology $H^{p}(X,-)$ and using Lemma \ref{functoriality_of_pull-back}, we obtain

\begin{center}
	\begin{tikzcd}
	{H^{p}(X,\Omega_{X}^{\otimes m})} \arrow[d] \arrow[rd] \arrow[dd, "D_{m}^{p}(g)"', bend right=67] \arrow[rrdd, "D_{m}^{p}(g\circ f)", bend left] &                                                                  &                                   \\
	{H^{p}(X,g^{*}\Omega_{X}^{\otimes m})} \arrow[d] \arrow[r]                                                                     & {H^{p}(X,g^{*}f^{*}\Omega_{X}^{\otimes m})} \arrow[d] \arrow[rd] &                                   \\
	{H^{p}(X,\Omega_{X}^{\otimes m})} \arrow[r] \arrow[rr, "D_{m}^{p}(f)"', bend right]                                                      & {H^{p}(X,f^{*}\Omega_{X}^{\otimes m})} \arrow[r]                 & {H^{p}(X,\Omega_{X}^{\otimes m})}.
	\end{tikzcd}
\end{center}.
\end{proof}

\subsection{Conclusion of the proof}\label{conclution}
Now we are ready to prove Theorem \ref{maintheorem}.
\begin{proof}
Using Lemma \ref{lem_localvsgrobal}, we obtain 
	\begin{align*}
		T_m(\phi^n)&=\sum_{i=0}^{1}(-1)^i\tr{D_m^i(\phi^n)}{\cohomo{i}{m}}.
	\end{align*}
Therefore, 
\begin{align*}
\lzeta{m}{\phi}=\exp\left( \sum_{n=1}^{\infty}\frac{t^n}{n} \sum_{i=0}^{1}(-1)^{i}\mathrm{tr}D_m^i(\phi^n)\right).
\end{align*}

Using Proposition \ref{lem_cohomology}, we have $D_m^i(\phi^n)=D_m^i(\phi)^n$. Note that  $D_m^i(\phi)$ is a $K$-linear operator acting on the finite dimensional $K$-vector space $\cohomo{i}{m}$.  Therefore, we obtain
\begin{align*}
&\exp\left( \sum_{n=1}^{\infty}\frac{t^n}{n} \sum_{i=0}^{1}(-1)^{i}\mathrm{tr}D_m^i(\phi^n)\right)\\
=&\exp\left( \sum_{n=1}^{\infty}\frac{t^n}{n} \sum_{i=0}^{1}(-1)^{i}\mathrm{tr}D_m^i(\phi)^n\right)\\
=&\exp\left( \sum_{n=1}^{\infty}\frac{t^n}{n} \mathrm{tr}D_m^0(\phi)^n\right)\exp\left( \sum_{n=1}^{\infty}\frac{t^n}{n} \mathrm{tr}D_m^1(\phi)^n\right)^{-1}\\
=&\frac{\det (1-tD_m^1(\phi))}{\det (1-tD_{m}^0(\phi))}\in K(t).
\end{align*}

Summing up, we conclude
\[\lzeta{m}{\phi}=\dfrac{\det (1-tD_m^1(\phi))}{\det (1-tD_{m}^0(\phi))} .\]

\end{proof}


\section{Examples}\label{examples}

In this section, we construct some examples of completely transversal rational functions and calculate an explicit form of $\dzeta{m}{\phi}$ using Theorem \ref{maintheorem}. We consider the case $K=\C$ since we use the results on complex dynamics.

\begin{defin}
Let $x$ be a periodic point with minimal period $n$ for a rational function $\phi\in \C(z)$, and $\lambda=\mult{\phi^n}{x}$ be the multiplier of $\phi$ at $x$. Then $x$ is:
\begin{itemize}
\item[$(1)$]\textit{attracting} if $|\lambda|<1$;
\item[$(2)$]\textit{repelling} if $|\lambda|>1$;
\item[$(3)$]\textit{rationally indifferent} if $\lambda$ is a root of unity;
\item[$(4)$]\textit{irrationally indifferent} if $|\lambda|=1$, but $\lambda$ is not a root of unity.
\end{itemize}
\end{defin}

\begin{defin}
Let $\phi\in\C(z)$ be a rational function and $x\in\per{n}{\phi}$.  For a point $c\in\proj_{\C}$, we say that $c$ \textit{is attracted to the orbit of} $x$ if there exists $i\in\{0,1,\dots,n-1\}$ such that $\displaystyle \lim_{m\to\infty}\phi^{mn}(c)=\phi^{i}(x)$ with respect to the classical topology of $\proj_{\C}$.
\end{defin}

\begin{thm}\label{att_rat_ind}
Let $\phi\in\C(z)$ be a rational function. If $x$ is a periodic point of $\phi$ and $x$ is attracting or rationally indifferent, then there exists a critical point $c$ of $\phi$ which is attracted to the orbit of $x$.
\end{thm}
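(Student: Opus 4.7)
The plan is to treat the attracting and rationally indifferent cases separately, after first reducing to the case where $x$ is a fixed point. Replacing $\phi$ by $\phi^{n}$, it suffices to exhibit a critical point of $\phi^{n}$ attracted to $x$; by the chain rule, every critical point of $\phi^{n}$ is of the form $\phi^{j}(c_{0})$ for some critical point $c_{0}$ of $\phi$ and some $0\le j<n$, so such a point immediately yields a critical point $c_{0}$ of $\phi$ attracted to the orbit of $x$. Henceforth I may assume $x$ is a fixed point of $\phi$ and write $\lambda=\phi'(x)$.

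For the attracting case ($|\lambda|<1$), the strategy is the classical inverse-extension argument. Let $g$ be the holomorphic germ of inverse of $\phi$ at $x$, so $g(x)=x$ and $|g'(x)|=|\lambda|^{-1}>1$. One attempts to analytically continue $g$ along every ray from $x$ in a local coordinate. If no critical value of $\phi$ ever obstructs the continuation, then $g$ extends to a univalent map on arbitrarily large disks around $x$, so the family of iterates $\{g^{m}\}$ is locally bounded and therefore normal by Montel's theorem; but their derivatives at $x$ blow up like $|\lambda|^{-m}\to\infty$, contradicting normality (equivalently, this contradicts Koenigs' linearization theorem). Hence the continuation first fails at some critical value $v=\phi(c)$ of $\phi$, and a direct check in the Koenigs coordinate shows that $c$ lies in the immediate basin of $x$, so its forward orbit converges to $x$.

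For the rationally indifferent case ($\lambda^{q}=1$), the plan is to invoke the Leau--Fatou flower theorem: after replacing $\phi$ by $\phi^{q}$, there exist finitely many attracting petals $P_{1},\dots,P_{s}$ abutting at $x$, each carrying a Fatou coordinate conjugating the dynamics on $P_{i}$ to the translation $w\mapsto w+1$, and such that every point of $P_{i}$ iterates to $x$. I would then run the inverse-extension argument on the immediate parabolic basin associated to each petal, with the Fatou coordinate playing the role of the Koenigs linearization. The absence of a critical point in this basin would allow the inverse branches of $\phi$ to extend univalently to arbitrarily large pull-backs of a petal, contradicting the translation structure of the Fatou coordinate; hence the basin contains a critical point of some iterate of $\phi$, and unwinding the reduction yields a critical point of $\phi$ attracted to the orbit of $x$.

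The main obstacle is the parabolic case: Koenigs' theorem supplies a linearizing disk around an attracting fixed point on which the argument runs cleanly, whereas in the parabolic case the Fatou coordinate is only defined on the petal and not on a full neighborhood of $x$. One must therefore be careful to transport the normal-family / inverse-branch argument to the immediate parabolic basin of a cycle of petals, and verify that the obstruction found really lies in the postcritical set of $\phi$ rather than in some boundary phenomenon inherent to the Leau--Fatou picture.
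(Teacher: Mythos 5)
The paper offers no argument of its own for this statement---its ``proof'' is simply the citation to Beardon, Theorems 9.3.1 and 9.3.2---and your outline is exactly the classical Fatou argument proved there (and in Milnor's book): Koenigs linearization plus the inverse-branch/normal-family contradiction in the attracting case, and the Leau--Fatou flower with Fatou coordinates in the rationally indifferent case. So your approach coincides with the source the paper relies on; what remains are three repairs. First, the chain rule gives the opposite of what you wrote: $z$ is a critical point of $\phi^{n}$ if and only if $\phi^{j}(z)$ is a critical point of $\phi$ for some $0\le j<n$, i.e.\ critical points of $\phi^{n}$ are pre-images, not forward images, of critical points of $\phi$. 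The reduction survives after this correction: if $z$ is a critical point of $\phi^{n}$ with $\phi^{mn}(z)\to x$, then $c_{0}=\phi^{j}(z)$ is a critical point of $\phi$ and $\phi^{mn}(c_{0})=\phi^{j}(\phi^{mn}(z))\to\phi^{j}(x)$, a point of the orbit of $x$. Second, your attracting-case argument tacitly assumes $\lambda\neq 0$, since you invert $\lambda$ and invoke Koenigs; the superattracting case $\lambda=0$ must be split off, where the claim is trivial because $x$ itself is then a critical point of $\phi^{n}$, hence some $\phi^{j}(x)$ is a critical point of $\phi$ lying on, and therefore attracted to, the orbit of $x$. Third, the parabolic half is, as you yourself note, only a plan: carrying out the inverse-extension argument on an immediate parabolic basin via the Fatou coordinate is precisely the content of \cite[Theorem 9.3.2]{Beardon} (equivalently, Milnor's theorem that every parabolic basin contains a critical point), and as written your text restates the needed result and the intended strategy rather than supplying the argument, so the rationally indifferent case is not yet proved within your proposal.
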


\begin{proof}
See \cite[Theorem 9.3.1. and Theorem 9.3.2.]{Beardon}.
\end{proof}

\begin{rem}\label{crit}
If $\phi(z)=F(z)/G(z)$ in lowest terms, the \textit{degree} of $\phi$ is $\deg \phi=\mathrm{max}\{\deg F,\deg G\}$. By Riemann-Hurwitz formula, $\phi$ has at most $2d-2$ critical points. See \cite[Section 1.2]{Silverman}.
\end{rem}

Using Theorem \ref{att_rat_ind}, we can find a sufficient condition for the complete transversality.
\begin{cor}\label{sufficient}
Let $\phi\in\C(z)$ be a rational function of degree $d$. If $\phi$ has $2d-2$ attracting periodic points whose orbits are pairwise distinct, then $\phi$ is completely transversal. 
\end{cor}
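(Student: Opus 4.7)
The plan is to argue by contradiction via a pigeonhole count on critical points, using Theorem \ref{att_rat_ind} and Remark \ref{crit}. Suppose toward contradiction that $\phi$ fails to be completely transversal, so there exists $x_0\in\per{n_0}{\phi}$ with $\mult{\phi^{n_0}}{x_0}=1$. Since $1$ is a root of unity, the cycle $\mathcal{O}_0=\mathcal{O}_{\phi}(x_0)$ is a rationally indifferent periodic orbit; in particular $\mathcal{O}_0$ is not attracting (attracting requires $|\lambda|<1$, while any root of unity has $|\lambda|=1$), so $\mathcal{O}_0$ is distinct from each of the $2d-2$ pairwise distinct attracting orbits $\mathcal{O}_1,\ldots,\mathcal{O}_{2d-2}$ supplied by the hypothesis.

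Next I would invoke Theorem \ref{att_rat_ind} uniformly. For each $i\in\{0,1,\ldots,2d-2\}$, the orbit $\mathcal{O}_i$ is either attracting or rationally indifferent, so the theorem supplies a critical point $c_i$ of $\phi$ that is attracted to $\mathcal{O}_i$. The remaining task is to show that the $2d-1$ critical points $c_0,c_1,\ldots,c_{2d-2}$ are pairwise distinct, which contradicts the bound of at most $2d-2$ critical points from Remark \ref{crit}.

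The key point is that the periodic orbit to which a given critical point $c$ is attracted is uniquely determined by $c$: if $\phi^{mn}(c)\to\phi^{i}(x)$ in the classical topology for some periodic point $x$ of period $n$, then the set of accumulation points of $\{\phi^{k}(c)\}_{k\ge 0}$ is precisely the finite cycle $\mathcal{O}_{\phi}(x)$, so $c$ cannot be attracted to two distinct periodic orbits. Combining this with the pairwise distinctness of $\mathcal{O}_0,\mathcal{O}_1,\ldots,\mathcal{O}_{2d-2}$ established in the first paragraph yields the required contradiction. I do not anticipate a real obstacle; the argument is essentially pigeonhole, and the only delicate point is the uniqueness of the attracting orbit of a critical point, which is immediate from the definition of convergence in $\proj_{\C}$.
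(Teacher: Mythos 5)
Your proof is correct and follows essentially the same route as the paper: a periodic point with $\mult{\phi^{n}}{x}=1$ is rationally indifferent, and the pigeonhole count of critical points via Theorem \ref{att_rat_ind} and Remark \ref{crit} rules this out. You merely make explicit two steps the paper leaves implicit (that a critical point is attracted to at most one periodic orbit, and that the attracting and indifferent orbits are distinct), which is a welcome amount of extra care but not a different argument.
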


\begin{proof}
By the definition of the complete transversality, $\phi$ is completely transversal if $\phi$ has no rationally indifferent periodic points. By Theorem \ref{att_rat_ind} and \ref{crit}, $\phi$ has no rationally indifferent periodic points if $\phi$ has $2d-2$ attracting periodic points whose orbits are pairwise distinct.
\end{proof}

We can construct completely transversal rational functions using Corollary \ref{sufficient}.
\begin{ex}
Let $\lambda_0,\lambda_{\infty}\in \C$ be complex numbers such that $|\lambda_0|<1$ and $|\lambda_{\infty}|<1$. We define $\phi\in\C(z)$ by 
\[\phi(z)=\frac{z^2+\lambda_0 z}{\lambda_{\infty} z+1}\in\C(z).\]
Then we have
\[\fixpt{\phi}=\left\{0,\infty,\alpha=\frac{1-\lambda_{0}}{1-\lambda_{\infty}}\right\}.\]
The multipliers of $\phi$ at $0$ and $\infty$ are $\mult{\phi}{0}=\lambda_0$ and $\mult{\phi}{\infty}=\lambda_{\infty}$, respectively. Since $|\lambda_0|<1$ and $|\lambda_{\infty}|<1$, $\phi$ has two attracting fixed points. On the other hand, $\phi$ has at most $2=2d-2$ critical points since $d=\deg \phi =2$.  Therefore, $\phi$ is completely transversal.
\end{ex}

\begin{rem}
All nonconstant polynomials have a fixed point at $\infty$ and the multiplier at $\infty$ is $0$. Therefore, if $\psi\in\C(z)$ is conjugate to a polynomial, then $\psi$ has a fixed point with multiplier $0$. Since $\mult{\phi}{0}=\lambda_0$, $\mult{\phi}{\infty}=\lambda_{\infty}$, and $\mult{\phi}{\alpha}=\dfrac{2-\lambda_{0}-\lambda_{\infty}}{1-\lambda_{0}\lambda_{\infty}}$, $\phi$ is not conjugate to any polynomial if $\lambda_{0}\lambda_{\infty}\neq 0$. 
\end{rem}

Next, we calculate the dynamical zeta function $\dzeta{1}{\phi}$ using the formula
\[\dzeta{1}{\phi}=\frac{\dyndet{1}}{\dyndet{2}}.\]

\begin{ex}
We use {\v{C}}ech cohomology to compute the linear maps $D_{1}^{1}(\phi)$ and $D_{2}^{1}(\phi)$ explicitly.
We define $\Fz(z),\Fi(z)\in \C[z]$, and $G_0(w)\in\C[w]$ by $\Fz(z)=z+\lamz$, $\Fi(z)=\lami z +1$, and $G_0(w)=\lamz w +1$, respectively. Note that $\phi(z)=z\Fz/\Fi$ and $G_0(1/z)=\Fz(z)/z$. We take open coverings $\mathcal{U}=\{U_0, U_1\}$ and $\mathcal{V}=\{V_0, V_1\}$ of $\proj$, where $U_0=\proj\setminus\{\infty\}=\spe{\C[z]}$, $U_1=\proj\setminus\{0\}=\spe{\C[w]}$, $V_0=\phi^{-1}(U_0)=\proj\setminus\{\infty, -\lami^{-1}\}=\spe{\C[z,\Fi^{-1}]}$, and $V_1=\phi^{-1}(U_1)=\proj\setminus\{0, -\lamz\}=\spe{\C[w,G_0^{-1}]}$. Then the differential induces the map $\check{H}^{1}(\mathcal{U},\Omega_{\proj}^{\otimes m})\to \check{H}^{1}(\mathcal{V},\Omega_{\proj}^{\otimes m})$ and this map is identified with $D_{m}^{1}(\phi)$ via the isomrphisms $H^{1}(\proj,\Omega_{\proj}^{\otimes m})\cong \check{H}^{1}(\mathcal{U},\Omega_{\proj}^{\otimes m})$ and $H^{1}(\proj,\Omega_{\proj}^{\otimes m})\cong \check{H}^{1}(\mathcal{V},\Omega_{\proj}^{\otimes m})$.

$\check{H}^{1}(\mathcal{U},\Omega_{\proj}^{\otimes m})$ is the first cohomology of the complex
\[\C[z](dz)^{\otimes m}\times \C[w](dw)^{\otimes m}\to \C[z^{\pm 1}]\dform{m}.\]
$\check{H}^{1}(\mathcal{V},\Omega_{\proj}^{\otimes m})$ is the first cohomology of the complex
\[\C[z,\Fi^{-1}](dz)^{\otimes m}\times \C[w,G_{0}^{-1}](dw)^{\otimes m}\to \C[z^{\pm 1},\Fz^
{-1},\Fi^{-1}]\dform{m}.\]
Note that $\dfrac{z^m a(z)}{\Fi^i}\left(\dfrac{dz}{z}\right)^{\otimes m}=0$ and $\dfrac{b(1/z)}{z^{m-j}\Fz^j}\left(\dfrac{dz}{z}\right)^{\otimes m}=0$ in $\check{H}^{1}(\mathcal{V},\Omega_{\proj}^{\otimes m})$ for all $a(z)\in\C[z]$ and $b(w)\in\C[w]$ since they are the image of $\dfrac{a(z)}{\Fi^i}(dz)^{\otimes m}$ and $\dfrac{b(w)}{G_0^j}(dw)^{\otimes m}$, respectively.
Both $\check{H}^{1}(\mathcal{U},\Omega_{\proj}^{\otimes m})$ and $\check{H}^{1}(\mathcal{V},\Omega_{\proj}^{\otimes m})$ have a $\C$-basis
\[\left\{ z^i \dform{m} \colon |i|<m\right \}.\]
The image of $z^i\left(\dfrac{dz}{z}\right)^{\otimes m}$ by $(d\phi)^{\otimes m}$ is
\begin{align*}
(d\phi)^{\otimes m}\left(z^{i}\dform{m}\right)&=\phi^{i}\frac{d\phi}{\phi}\\
&=z^i\frac{\Fz^{i}}{\Fi^{i}}\left(1+\frac{(1-\lamz\lami)z}{\Fz\Fi}\right)^{m}\dform{m}\\
&=\sum_{j=0}^{m}\binom{m}{j}(1-\lamz\lami)^j\frac{z^{i+j}}{\Fz^{j-i} \Fi^{i+j}}\dform{m}.
\end{align*}
We can compute $\dfrac{z^{i+j}}{\Fz^{j-i} \Fi^{i+j}}\left(\dfrac{dz}{z}\right)^{\otimes m}$ by
\[ \frac{\lamz}{\Fz}=-\sum_{n=1}^{m-1}\left(\frac{-\lamz}{z}\right)^n-\frac{(-\lamz)^m}{z^{m-1}\Fz},\]
\[ \frac{1}{\Fi}=\sum_{n=0}^{m-1}(-\lami z)^n+\frac{(-\lami z)^{m}}{\Fi}, \,\text{and}\]
\[ \frac{(1-\lamz\lami)z}{\Fz\Fi}=\frac{1}{\Fi}-\frac{\lamz}{\Fz}.\]
For example, if $m=1$ then $\dfrac{1}{\Fi}\dfrac{dz}{z}=\dfrac{dz}{z}$ and $\dfrac{\lamz}{\Fz}\dfrac{dz}{z}=0$ in $\check{H}^{1}(\mathcal{V},\Omega_{\proj})$. Therefore,
\begin{align*}
d\phi\left(\frac{dz}{z}\right)&=\sum_{j=0}^{1}\binom{1}{j}(1-\lamz\lami)^j\frac{z^{j}}{\Fz^{j} \Fi^{j}}\frac{dz}{z}\\
&=\frac{dz}{z}+\frac{(1-\lamz\lami)z}{\Fz\Fi}\frac{dz}{z}\\
&=\frac{dz}{z}+\left(\frac{1}{\Fi}-\frac{\lamz}{\Fz}\right)\frac{dz}{z}\\
&=2\frac{dz}{z}.
\end{align*}
The characteristic polynomial of $D_{1}^{1}(\phi)$ is $\det(1-tD_{1}^{1}(\phi))=1-2t$.

A similar but complicated computation shows that the representation matrix of $D_{2}^{1}(\phi)$ with respect to the basis $\{z^i(dz/z)^{\otimes 2} \colon |i|<2\}$ is 
\begin{align*}
\begin{pmatrix}
-\dfrac{\lamz\lami^{2}}{1-\lamz\lami}	& -2\lamz\dfrac{2-\lamz\lami}{1-\lamz\lami}	& -\dfrac{\lamz^{3}}{1-\lamz\lami} \\[8pt]
\dfrac{\lami^{2}}{1-\lamz\lami}			& 2\dfrac{2-\lamz\lami}{1-\lamz\lami}			& \dfrac{\lamz^{2}}{1-\lamz\lami} \\
-\dfrac{\lamz^{2}\lami}{1-\lamz\lami}	& -2\lami\dfrac{2-\lamz\lami}{1-\lamz\lami}	&
 -\dfrac{\lamz^{2}\lami}{1-\lamz\lami} \\
\end{pmatrix}
=
\begin{pmatrix}
\lamz A 	& \lamz B 	& \lamz C\\
-A 			& -B 			& -C\\
\lami A 	& \lami B 		& \lami C\\
\end{pmatrix},
\end{align*}
where 
\[A=-\frac{\lami^2}{1-\lamz\lami},\, B=-2 \dfrac{2-\lamz\lami}{1-\lamz\lami}, \text{and}\,  C=-\dfrac{\lamz^{2}}{1-\lamz\lami}.\]
Therefore, the characteristic polynomial of $D_{2}^{1}(\phi)$ is
\begin{align*}
\det(1-tD_{2}^{1}(\phi))&=1+(B-\lamz A- \lami C)t\\
&=1-\left(2+\lamz+\lami+\frac{2-\lamz-\lami}{1-\lamz\lami}\right)t\\
&=1-(2+\sigma_{1}(\phi))t,
\end{align*}
where $\sigma_{1}(\phi)=\mult{\phi}{0}+\mult{\phi}{\infty}+\mult{\phi}{\alpha}$ is the sum of the multipliers at fixed points of $\phi$.

Summing up, we obtain an explicit formula for $\dzeta{1}{\phi}$.
\begin{align*}
\dzeta{1}{\phi}&=\frac{\dyndet{1}}{\dyndet{2}}\\
&=\frac{1-2t}{1-(2+\sigma_1(\phi))t}.
\end{align*}
\end{ex}

\bibliographystyle{plain}
\bibliography{dynamicalzeta_1}

\end{document}